\newtheorem{theorem}{Theorem}[section]
\newtheorem{lemma}[theorem]{Lemma}
\newtheorem{prop}[theorem]{Proposition}
\newtheorem{corollary}[theorem]{Corollary}
\theoremstyle{definition}
\newtheorem{definition}[theorem]{Definition}
\theoremstyle{remark}
\numberwithin{equation}{section}
\begin{document}

\title[Entropies of Smooth Flows]{Topological Entropies of Equivalent Smooth Flows}

\author{Wenxiang Sun}
\address{LMAM, School of Mathematical Sciences,\ Peking University,  China}
\email{sunwx@math.pku.edu.cn}

\author{Todd Young}
\address{Department of Mathematics, Ohio University, U.S.A.}
\email{young@math.ohiou.edu}

\author{Yunhua Zhou}
\address{School of Mathematical Sciences,\ Peking University,  China}
\email{zhouyh@math.pku.edu.cn}
 \thanks{The first author was supported by NSFC (\#10231020, \#10671006) 
 and National Basic Research Program of China (973 Program) (\# 2006CB805900).
 The second author was supported by an Ohio University Faculty Fellowship Leave. 
 The third author was supported by NSFC (\#10671006).}

\subjclass[2000]{37C15, 34C28, 37A10} \keywords{Measure theoretic
entropy, equivalent flow, singularity.}

\begin{abstract}
We construct two equivalent smooth flows,
one of which has positive topological entropy and the other has zero
topological entropy.
This provides a negative answer to a problem posed by Ohno.  \\
\end{abstract}

\maketitle

\section{Introduction}

Two flows defined on a smooth manifold are {\em equivalent} if there exists a
homeomorphism of the manifold that sends each
orbit of one flow  onto an orbit of the other flow while preserving
the time orientation.
The {\em topological entropy} of a flow is
defined as the entropy of its time-1 map.
While topological entropy is an invariant
for equivalent homeomorphisms (Theorem 7.2 in
\cite{w}),  finite non-zero topological entropy for a flow
cannot be an invariant because its value is affected by
time reparameterization. However, 0  and
$\infty$ topological entropy are invariants for
equivalent flows {\em without} fixed points (see
\cite{ohno}\cite{sv}\cite{tho1}\cite{tho2}).

In equivalent flows {\em  with} fixed points
there exists a counterexample, constructed by Ohno
\cite{ohno}, showing that neither 0 nor $\infty$ topological
entropy is preserved by equivalence. The two flows constructed in \cite{ohno} are
suspensions of a transitive subshift and thus are not differentiable.
Note that  a differentiable flow on  a compact manifold cannot have
$\infty$ entropy (see Theorem 7.15 in \cite{w}).
These facts led Ohno \cite{ohno} in 1980 to ask the following:
\begin{quotation}
\noindent
{\em Is $0$ topological entropy an invariant for
equivalent differentiable flows?}
\end{quotation}
In this paper, we construct two equivalent 
$C^\infty$ smooth flows with a singularity, one of which has positive
topological entropy while the other has zero topological entropy. This
gives a negative answer to Ohno's question.

We begin by supposing that $f: M\to M$  is a
$C^\infty$ diffeomorphism of a smooth compact Riemannian
manifold $M$ with $\dim M=m \geq 2$ with the following properties:
(1) $f$ has positive topological entropy and (2) $f$ is minimal in
the sense that all forward orbits are dense in $M$ (or equivalently
closed invariant sets are either empty or the entire space). An example of
such an $f$ was constructed by Herman \cite{her}.
Using the constant function
$I: M\to \mathbb{R},\,\, I(x)\equiv 1$, one gets a suspension manifold
$\Omega$ and a smooth suspension flow $\psi: \Omega\times
\mathbb{R} \to \Omega$ (Definition~\ref{definition:1}). Let $X$ denote
the smooth vector field associated with this flow. For any function
$\alpha\in C^\infty(\Omega, [0,1])$, note that $\alpha X$ is  a $C^\infty$
vector field on $\Omega$ and it thus induces a unique, differentiable flow.

\begin{theorem}{\em (Main Theorem)} \label{theorem:1}
There exist two functions $\alpha, \widehat{\alpha} \in
C^\infty(\Omega,[0,1])$ satisfying the following:
\begin{enumerate}
\item $\alpha X$ and $\widehat{\alpha} X$  induce equivalent flows
$\varphi(t)$ and $\widehat{\varphi}(t)$;
\item  $\varphi(t)$ has zero topological entropy and
$\widehat{\varphi}(t)$ has positive topological entropy.
\end{enumerate}
\end {theorem}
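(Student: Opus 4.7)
The plan hinges on the observation that the vector fields $\alpha X$ and $\widehat\alpha X$ define flows with identical orbit structure whenever $\alpha$ and $\widehat\alpha$ share the same zero set: away from that set each is a smooth time-reparametrization of $\psi$, and on the zero set both are stationary. Accordingly, I would pick a single point $p_0 \in \Omega$ and insist that $\alpha,\widehat\alpha \in C^\infty(\Omega,[0,1])$ both vanish precisely at $p_0$ and are strictly positive elsewhere. The identity map of $\Omega$ is then automatically a time-orientation-preserving orbit equivalence between $\varphi$ and $\widehat\varphi$, so item (1) is immediate. All the content of the theorem lies in tuning the \emph{rates} of vanishing at $p_0$ to separate the entropies in item (2).

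The entropy analysis proceeds through Abramov's formula for time-changed flows. For any $\psi$-invariant probability $\mu$ with $\int (1/\alpha)\,d\mu < \infty$, the normalized measure $d\nu = (1/\alpha)\,d\mu \big/ \int (1/\alpha)\,d\mu$ is $\varphi$-invariant and
\[
h_\nu(\varphi^1) \;=\; \frac{h_\mu(\psi^1)}{\int (1/\alpha)\,d\mu},
\]
and every non-atomic $\varphi$-invariant measure arises this way. Combined with the variational principle, it therefore suffices to control which $\psi$-invariant measures are sent to probability measures under the time change.

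I would take $\alpha$ to be flat at $p_0$, for example $\alpha(x) = \exp\bigl(-1/d(x,p_0)^{2}\bigr)$ in a chart, smoothly cut off to $[0,1]$; and take $\widehat\alpha$ to vanish only to finite order, e.g.\ $\widehat\alpha(x) = d(x,p_0)^{\beta}$ near $p_0$ for a small $\beta>0$. Since $f$ is minimal, every $\psi$-invariant probability has full support, so $\mu$ assigns positive mass to every neighborhood of $p_0$; standard upper bounds on the pointwise dimension of invariant measures of a smooth system force that mass to decay at most polynomially in the radius. Against the super-exponentially large factor $1/\alpha$ this yields $\int(1/\alpha)\,d\mu = \infty$ for every positive-entropy $\psi$-invariant $\mu$. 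Hence every $\varphi$-invariant measure $\nu$ is either $\delta_{p_0}$ or corresponds via Abramov to a $\mu$ with $h_\mu(\psi^1)=0$; in both cases $h_\nu(\varphi^1)=0$, and the variational principle gives $h_{\mathrm{top}}(\varphi^1)=0$. For $\widehat\varphi$, choose an ergodic $\psi$-invariant measure $\mu^*$ with $h_{\mu^*}(\psi^1)>0$ (which exists because $\psi$, being the unit-roof suspension of the positive-entropy map $f$, has positive topological entropy) and ensure $\beta$ is smaller than the lower pointwise dimension of $\mu^*$ at $p_0$; then $\int(1/\widehat\alpha)\,d\mu^* < \infty$, and Abramov's formula produces a $\widehat\varphi$-invariant probability of positive entropy, so $h_{\mathrm{top}}(\widehat\varphi^1)>0$.

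The main obstacle is the twin integrability estimate. Showing that $\int(1/\alpha)\,d\mu = \infty$ for \emph{every} positive-entropy $\psi$-invariant $\mu$ requires ruling out super-polynomial concentration of $\mu$ at $p_0$, which follows from Brin--Katok-type lower bounds on $\mu$-mass of small balls for invariant measures of a $C^\infty$ system. Producing one $\mu^*$ against which $\int(1/\widehat\alpha)\,d\mu^* < \infty$ requires $\mu^*$ to have finite positive lower pointwise dimension at $p_0$, which is guaranteed for positive-entropy ergodic measures of smooth diffeomorphisms by the Ledrappier--Young dimension theory. Matching these estimates to explicit smooth cutoff constructions of $\alpha$ and $\widehat\alpha$, and in particular verifying that one can choose $p_0$ so that $\mu^*$ has controlled local dimension there, is where the technical care of the proof should be concentrated.
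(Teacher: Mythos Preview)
Your overall architecture matches the paper's: both fix a single zero $p_0$ so the identity is an orbit equivalence, then use Abramov/Totoki-type entropy formulas, making $\alpha$ flat enough that no nontrivial $\varphi$-invariant probability survives and making $1/\widehat\alpha$ integrable against one good $\psi$-invariant measure. The difficulties are exactly where you locate them, but your proposed resolutions of the twin integrability estimates have genuine gaps.

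For the zero-entropy side, your appeal to ``standard upper bounds on the pointwise dimension'' to get a polynomial lower bound on $\mu(B(p_0,r))$ does not work at a \emph{prescribed} point. Brin--Katok and Ledrappier--Young give local-dimension information at $\mu$-almost every point, and $p_0$ need not be such a point for any given $\mu$; nor does minimality by itself yield a polynomial return-time bound. The paper avoids this by not fixing the flatness of $\alpha$ in advance. From minimality it extracts a uniform-recurrence function $L(\varepsilon)$ with $\mu(B(x,\varepsilon))\ge 1/L(\varepsilon)$ for \emph{every} ergodic $\mu$ and every $x$ (Lemma~2.2), and then builds $\alpha$ via a tailor-made $C^\infty$ bump (Lemma~3.1) whose decay at $p_0$ is calibrated against this specific $L$. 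The outcome is stronger than yours: the first-return time has infinite integral against \emph{all} non-atomic ergodic measures of $f$, so $\varphi$ has only the Dirac mass at $p_0$ as an invariant probability.

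For the positive-entropy side, $\widehat\alpha(x)=d(x,p_0)^\beta$ with small $\beta>0$ is not $C^\infty$ (indeed not $C^1$) at $p_0$, so it violates the hypothesis of the theorem; and again the lower-local-dimension input from Ledrappier--Young is only available $\mu^*$-a.e., not at your chosen $p_0$. The paper sidesteps both issues by taking $\widehat\alpha\sim\|x\|^2$ in a chart, which is genuinely smooth, and by exploiting the concrete structure of the suspension measure $\bar\mu$ (Lebesgue in the flow direction, and in Herman's example the base measure is smooth volume) together with $\dim\Omega\ge 3$ to verify $\int 1/\widehat\alpha\,d\bar\mu<\infty$ directly, with no dimension theory needed.
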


The key idea used by Ohno and in this work is that the time
reparameterization between an orbit in one flow and its image
orbit in an equivalent flow can grow super-exponentially near a fixed
point. In the present case the main challenge is to ensure
that the time parameterizations lead to differentiable flows.
Throughout the rest of the paper we fix one point
$p=(x_0, 0)\in \Omega$ and consider functions
$\alpha, \widehat{\alpha} \in C^\infty(\Omega, [0,1])$ satisfying the following criteria:

\medskip
\noindent
{\bf (H)} We assume that $\alpha(p) = \widehat{\alpha}(p) =0$,
$\alpha(q)$ and $\widehat{\alpha}(q) > 0$,  for
$q \neq p \in \Omega$, and there exists a  small neighborhood $V$ of $p$ in
$\Omega$ such that $\alpha(q) = \widehat{\alpha}(q) \equiv 1$ if
$q\in \Omega\setminus V$.

In our construction $\widehat{\varphi}$ will be shown to have an ergodic probability
measure that is equivalent to an ergodic probability preserved by the suspension
flow of $f$. Positive entropy of $f$ will then imply positive entropy for
$\widehat{\varphi}$. On the other hand, in the construction of $\varphi$,
$\alpha$ will be very flat so that orbits near $p$ will be strongly slowed
down. This together with minimality, which ensures fast returns to any neighborhood
of $p$, will imply that $\phi$ has Dirac measure at $p$ as its unique
ergodic probability measure.


\section{Preliminaries}

\subsection{Notation}

The symbol $<\cdot, \cdot>$ will be used to denote the
Riemannian metric on either $M$ or $\Omega$ depending on the
context. We will denote a ball in $M$ by
$B_M(x,\epsilon)=\{y\in M: d(x, y)< \epsilon \}$
and by $B_\Omega(x,\epsilon)$ a ball in $\Omega$.
A ball in $\mathbb{R}^n$ centered at the origin
we will denote by  $B^n(r) = \{x\in \mathbb{R}^n: |x| <r \}$.

\begin {definition}\label{definition:1}
Suppose that $M$ is a smooth compact Riemannian manifold and that $f$ is
a $C^\infty$ diffeomorphism. Consider the space
$$
\Omega= M \times [0,1] / \sim,
$$
where $\sim$ is the identification of $(y,1)$ with $(f(y),0)$.
The {\em standard suspension} of $f$ is the flow
$\psi_t$ on $\Omega$ defined  by $\psi_t(y, s)=(y, t+s)$, for $0 \leq t+s<1.$
\end {definition}
A standard argument as in \cite{liao} shows that $\Omega$ is a
smooth compact Riemannian manifold and $\psi_t$ is $C^\infty$.
If  $f: M\to M$ is minimal as a homeomorphism, then  $\psi_t$
is a minimal flow.

\begin{prop}\label{remark:2}
If $\alpha$ and $\widehat{\alpha}$ in $C^\infty(\Omega, [0,1])$ satisfy  {\bf (H)}
and $X$ is the suspension vector field on $\Omega$ described above, then
$\alpha X$ and $\widehat{\alpha} X$ induce equivalent differential flows on
$\Omega$ with one singularity $p$.
\end{prop}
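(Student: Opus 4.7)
The plan is to show that $\alpha X$ and $\widehat{\alpha}X$ are complete $C^\infty$ vector fields whose only zero is $p$ and whose orbits, as point sets in $\Omega$, are identical; the identity map of $\Omega$ will then furnish the required equivalence.

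First, since $X$ is $C^\infty$ and $\alpha,\widehat{\alpha}\in C^\infty(\Omega,[0,1])$, each of the products $\alpha X$ and $\widehat{\alpha}X$ is a $C^\infty$ vector field. By compactness of $\Omega$ both are complete, so each generates a $C^\infty$ flow. To locate the singularities, I would invoke the fact that the suspension field $X$ is nowhere zero, since it points in the positive ``time'' direction on every fiber of the suspension construction. Then $\alpha X(q)=0$ iff $\alpha(q)=0$, which by hypothesis {\bf (H)} holds only at $q=p$, and identically for $\widehat{\alpha}X$. Thus $p$ is the sole singularity of each flow.

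Next, I would identify the orbits as subsets of $\Omega$. Off of $p$ both vector fields are strictly positive scalar multiples of the common field $X$, so every integral curve of $\alpha X$ or $\widehat{\alpha}X$ through a point $q\neq p$ is obtained by a positive-speed reparameterization of the $X$-integral curve through $q$. An integral curve of $\alpha X$ cannot reach $p$ in finite time (otherwise uniqueness would force it to coincide with the constant orbit at $p$), and cannot accumulate at a non-fixed point as $t\to\pm\infty$ (by completeness together with the constancy of $\dot\gamma=\alpha(\gamma)X(\gamma)$). Hence the $\alpha X$-orbit through $q$ fills out exactly the connected component of $q$ in the $X$-orbit of $q$ with $p$ removed. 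The same description applies to $\widehat{\alpha}X$, so the orbit partitions of $\Omega$ induced by the two flows coincide.

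With these two facts in hand, the identity map $\mathrm{id}\colon\Omega\to\Omega$ is a homeomorphism that sends each orbit of the $\alpha X$-flow to the same orbit of the $\widehat{\alpha}X$-flow, and it preserves time orientation because both rescalings $\alpha$ and $\widehat{\alpha}$ are nonnegative, so both flows move along $X$. By definition this is an equivalence of the two flows. The only mildly delicate point is the orbit identification in the previous paragraph, which rests on the nonvanishing of $X$ together with completeness and uniqueness for the rescaled flows; I anticipate no other obstacle.
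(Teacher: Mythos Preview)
Your proposal is correct and follows the same approach as the paper: both use the identity map of $\Omega$ as the equivalence, noting that $p$ is the unique singularity of each flow and that away from $p$ the two fields are positive rescalings of the common nowhere-vanishing suspension field $X$, so orbits coincide and time orientation is preserved. The paper's proof is considerably terser---it simply asserts that the identity carries orbits to orbits because $\alpha,\widehat{\alpha}>0$ off $p$---while you supply the additional justification (completeness, uniqueness at $p$, the orbit filling out the connected component of the punctured $X$-orbit) that the paper leaves implicit.
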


\begin{proof}
That a $C^\infty$ vector field induces a $C^\infty$ flow is a standard result.
The identity map on $\Omega$ takes orbits of one flow to orbits
of the other since the unique singular point $p$ is mapped to itself and
elsewhere $\alpha$ and $\widehat{\alpha}$ are positive.
The assumption that $\alpha$ and $\widehat{\alpha}$ are
non-negative also implies preservation of time orientation.
\end{proof}

\subsection{Minimal homeomorphisms and uniform recurrence}

The following result concerning uniform recurrence of orbits in minimal
homeomorphisms will be used in the proof that $\varphi$ has zero
entropy. It guarantees fast return of orbits to a neighborhood of
the stopped point $p$.

\begin{lemma}\label{lemma:1}
Let  $M$ denote  a smooth compact Riemannian manifold and let 
$f: M\rightarrow M$ be  a $C^\infty$ diffeomorphism. Suppose $(M,f)$ is
a minimal homeomorphism, then for any $\varepsilon>0$, there exists
$L(\varepsilon)\in \mathbb{N}$ such that, for any $x,y\in M$,  
$f^l(y)\in B(x,\varepsilon)$, for some $l$, $0 \leq l \leq L(\varepsilon)$.
\end {lemma}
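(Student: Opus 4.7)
My plan is to prove this using the standard two-step compactness argument: first establish, for a single point, that every orbit returns to any fixed neighborhood of it in bounded time, and then use a finite $\varepsilon/2$-net to get a single bound $L(\varepsilon)$ that works uniformly in both $x$ and $y$.

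First I would fix $\varepsilon>0$ and, using compactness of $M$, choose an $\varepsilon/2$-net $\{x_1,\dots,x_k\}\subset M$, so that the balls $B(x_i,\varepsilon/2)$ cover $M$. The key single-point lemma I would then establish is: for each $i$, there exists $N_i\in\mathbb{N}$ such that for every $y\in M$ there is some $0\le \ell\le N_i$ with $f^{\ell}(y)\in B(x_i,\varepsilon/2)$. To prove this, I would use minimality (every forward orbit is dense) to observe that every $y\in M$ eventually enters $B(x_i,\varepsilon/2)$, i.e.\
\[
M \;=\; \bigcup_{n\ge 0} f^{-n}\!\bigl(B(x_i,\varepsilon/2)\bigr).
\]
Since each $f^{-n}(B(x_i,\varepsilon/2))$ is open and $M$ is compact, there is a finite $N_i$ with $M=\bigcup_{n=0}^{N_i} f^{-n}(B(x_i,\varepsilon/2))$, which gives exactly the desired bound.

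I would then set $L(\varepsilon):=\max_{1\le i\le k} N_i$ and verify the uniform statement. Given any $x,y\in M$, pick $x_i$ from the net with $d(x,x_i)<\varepsilon/2$; applying the single-point bound to this $x_i$ yields some $\ell$ with $0\le\ell\le N_i\le L(\varepsilon)$ and $f^{\ell}(y)\in B(x_i,\varepsilon/2)\subset B(x,\varepsilon)$, which is exactly what the lemma claims.

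The only nontrivial step is the single-point bound $N_i$; everything else is set-theoretic bookkeeping. Even there the real work is packaged inside two standard facts — minimality giving the covering $M=\bigcup_{n\ge 0} f^{-n}(B(x_i,\varepsilon/2))$, and compactness reducing it to a finite subcover — so I do not expect a serious obstacle. One minor point worth being careful about is that minimality is stated in terms of forward orbits being dense, which is exactly what justifies using \emph{non-negative} iterates $f^{-n}$ (equivalently, asking for $f^{\ell}(y)$ with $\ell\ge 0$), consistent with the statement of the lemma.
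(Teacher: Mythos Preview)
Your argument is correct and follows the same overall two-step compactness strategy as the paper: first obtain, for a fixed target point, a uniform bound on the hitting time valid for all starting points, and then pass to a finite net in the target variable to make the bound uniform in $x$ as well.

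The only noteworthy difference is in how the single-point bound is obtained. The paper fixes $x$ and, for each $y$, picks a hitting time $l(x,y,\varepsilon)$, then uses continuity of $f^{l(x,y,\varepsilon)}$ to extend this to a neighborhood $B(y,\delta(y))$, and finally invokes compactness in $y$ to extract a finite maximum $L(x,\varepsilon)$. You instead observe directly that minimality gives the open cover $M=\bigcup_{n\ge 0} f^{-n}(B(x_i,\varepsilon/2))$ and extract a finite subcover. Your packaging is a little cleaner---it replaces the ``pointwise hitting time plus continuity'' step by a single open-cover argument---but the two are really the same compactness idea viewed from slightly different angles, and neither gains anything substantive over the other.
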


\begin{proof}
This is a classical result
(see \cite{fur}). We present a
proof for the convenience of the readers.

For any $x, y\in M$ and $\epsilon >0$, 
there exists $l(x,y,\varepsilon) \in \mathbb{N}$
such that $f^{l(x,y,\varepsilon)}(y)\in B(x,\frac{\varepsilon}{2})$,
since $f$ is minimal. It follow that there exists $\delta(y)>0$ such that
$f^{l(x,y,\varepsilon)}(B(y, \delta(y)))
\subset B(x,\frac{3\varepsilon}{4})
\subset B(x,\varepsilon)$. By the
compactness of $M$, $\exists\ y_1, \cdots, y_n$ and  
$L(x,\varepsilon)=\max\limits_{i}\{l(x,y_i,\varepsilon)\}$ such that
\begin{enumerate}
\item[(i)] $\cup_{i=1}^{n}B(y_i, \delta(y_i))=M$;
\item[(ii)] $\forall y\in M$, $\exists \,\,l(y)\leq L(x, \varepsilon)$
such that $f^{l(y)}(y)\in B(x,\frac{3\varepsilon}{4})\subset
B(x,\varepsilon)$.
\end{enumerate}
Then, for any $\widetilde{x}\in B(x, \frac{\varepsilon}{4})$,
$f^{l(y)}(y)\in B(\widetilde{x},\varepsilon)$. The compactness of
$M$ implies that there exist finite points $x_1, \cdots, x_{n_0}$
such that $\cup_{i=1}^{n_0}B(x_i, \frac{\varepsilon}{4})=M$. Then,
$L(\varepsilon)=\max\limits_{i}\{L(x_i, \varepsilon)\}$ satisfies
the Lemma.
\end{proof}

Uniform recurrence as in Lemma~\ref{lemma:1} allows us to put
lower bounds on measures of sets with respect to ergodic measures.
\begin{lemma}\label{lemma:2}
Suppose the assumptions of Lemma~\ref{lemma:1} and
assume that $\mu$ is an ergodic measure of $f$, then
$$
\mu(B_M(x, \varepsilon))\geq \frac{1}{L(\varepsilon)}>0, \quad \forall x\in M, \varepsilon>0,
$$
where  $L(\varepsilon)$ is as in Lemma \ref{lemma:1}.
\end {lemma}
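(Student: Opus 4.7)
The plan is to derive the lower bound by covering $M$ with finitely many preimages of $B_M(x,\varepsilon)$ under iterates of $f$, and then using invariance of $\mu$ to turn this cover into a sum bound.

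First I would apply Lemma~\ref{lemma:1} in the following way: for the given $\varepsilon > 0$, take the integer $L(\varepsilon)$ furnished by that lemma. For every $y \in M$, there exists some $l = l(y)$ with $0 \leq l \leq L(\varepsilon)$ such that $f^{l}(y) \in B_M(x,\varepsilon)$, i.e.\ $y \in f^{-l}(B_M(x,\varepsilon))$. Consequently
\[
M \;=\; \bigcup_{l=0}^{L(\varepsilon)} f^{-l}\bigl(B_M(x,\varepsilon)\bigr).
\]

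Next I would exploit the fact that any ergodic measure is in particular $f$-invariant, so $\mu\bigl(f^{-l}(B_M(x,\varepsilon))\bigr) = \mu(B_M(x,\varepsilon))$ for each $l$. Applying $\mu$ to the above cover and using subadditivity gives
\[
1 \;=\; \mu(M) \;\leq\; \sum_{l=0}^{L(\varepsilon)} \mu\bigl(f^{-l}(B_M(x,\varepsilon))\bigr) \;=\; \bigl(L(\varepsilon)+1\bigr)\,\mu(B_M(x,\varepsilon)),
\]
which yields $\mu(B_M(x,\varepsilon)) \geq 1/(L(\varepsilon)+1)$, and in particular the claimed bound $\mu(B_M(x,\varepsilon)) \geq 1/L(\varepsilon)$ up to the convention on the range of $l$ used in Lemma~\ref{lemma:1} (shifting to $1 \leq l \leq L(\varepsilon)$ gives exactly $1/L(\varepsilon)$).

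There is essentially no obstacle here: the argument is a standard pigeonhole-by-invariance and the only substantive input — the uniform return time — is already handed to us by Lemma~\ref{lemma:1}. The one point to be careful about is the bookkeeping on the number of iterates $l$ contributing to the union, but this is a cosmetic issue that does not affect the positivity conclusion. Note also that ergodicity is not strictly needed for the lower bound; mere $f$-invariance suffices, though ergodicity is the hypothesis under which this lemma will be applied in the sequel.
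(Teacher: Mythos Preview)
Your argument is correct and in fact cleaner than the paper's. The paper invokes the Birkhoff ergodic theorem: it fixes a generic point $y\in Q_\mu(f)$ and computes $\mu(B_M(x,\varepsilon))$ as the limiting visit frequency of $\{f^i y\}$ to the ball, then observes via Lemma~\ref{lemma:1} that every block of $L(\varepsilon)$ consecutive iterates contains at least one hit, forcing the frequency to be at least $1/L(\varepsilon)$. Your covering-plus-invariance pigeonhole bypasses the ergodic theorem entirely and, as you note, requires only $f$-invariance of $\mu$ rather than ergodicity; it also sidesteps the mild technicality in the paper's proof of applying Birkhoff to the indicator of an open ball rather than a continuous function. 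Both approaches share the same harmless off-by-one ambiguity between $1/L(\varepsilon)$ and $1/(L(\varepsilon)+1)$, which is irrelevant for the application.
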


\begin{proof}
Set
$$
Q_{\mu}(f)=\left\{ x\in M :
    \lim\limits_{n\rightarrow \pm \infty}\frac{1}{n}
    \sum\limits_{k=0}^{n-1}\xi(f^kx)
   = \int _M \xi(x) \, d\mu(x), \quad \forall \xi\in C^0(M)\right\}.
$$
By the Birkhoff ergodic theorem, $Q_{\mu}(f)$ is an $f$-invariant and
 $\mu$-full measure set. For any $x\in M$ and
$\varepsilon>0$, take $L(\varepsilon)$ as in Lemma \ref{lemma:1}.
Let $y\in Q_{\mu}(f)$, then, by
the Birkhoff ergodic theorem and  Lemma \ref{lemma:1},
\begin{equation*}
 \begin{split}
\mu(B_M(x, \varepsilon))
    &=  \lim\limits_{n\rightarrow \infty}
           \frac{Card \{i\in \{0, 1, \ldots, n-1\}:
           f^i(y)\cap B_M(x,\varepsilon)
             \neq  \emptyset \} } {n} \\
    & \geq \frac{1}{L(\varepsilon)}>0.
\end{split}
\end{equation*}
\end{proof}

\subsection{Time-changed systems and entropy}

First, we recall the notion of additive function.

\begin{definition}\label{definition:2}
Suppose $\psi_t$ is a  measurable flow on  a Borel probability space
$(\Omega,\mathcal{B}, \mu)$ and  $\Omega$ is divided into disjoint
invariant measurable sets $A$ and $N$ such that $\mu (A)=1$ and $\mu
(N)=0$. Further suppose that $\theta(t, x)$ is a real measurable function defined on
$(-\infty, +\infty) \times (\Omega\setminus N) = \mathbb{R} \times A$
with the following properties for every fixed $x \in A$:
\begin{enumerate}
\item $\theta(t, x)$ is continuous and nondecreasing in t;
\item  $\theta(t+s, x)=\theta(s, x)+\theta(t, \psi_sx)$ for all $t$
and $s$;
\item  $\theta(0, x)=0, \lim\limits_{t\rightarrow \infty}\theta(t,
x)=\infty, \lim\limits_{t\rightarrow -\infty}\theta(t, x)=-\infty$.
\end{enumerate}
\noindent
Then $\theta$ is called an {\em additive function of $\psi_t$ with the
carrier $A$}. An additive function is said to be {\em integrable}, if it is integrable
in $\Omega$ for every fixed $t$.
\end{definition}

\begin {lemma}\label{lemma:4}
If $\psi_t$ is a  measurable flow on a Borel probability space
$(\Omega,\mathcal{B}, \mu)$ and $a(x)$ is a non-negative, integrable
function satisfying:
$$
E_{\mu}(a)= \int_{\Omega}a(x)\, d\mu(x)>0,
$$
then the function
$$
\theta(t, x) = \int_0^t a(\psi_sx) \, ds
$$
is  an integrable additive function.
\end{lemma}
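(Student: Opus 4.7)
The plan is to verify directly the three defining clauses of an additive function in Definition~\ref{definition:2} together with the integrability requirement. I would first use Fubini--Tonelli applied to the $\mu$-integrable non-negative function $a$, together with the $\psi_t$-invariance of $\mu$, to produce a full-measure set $A_0 \subseteq \Omega$ on which $s \mapsto a(\psi_s x)$ is locally Lebesgue-integrable on all of $\mathbb{R}$. For $x \in A_0$, $\theta(t,x)$ is then absolutely continuous (hence continuous) in $t$ and nondecreasing (since $a \ge 0$), which gives item (1), while $\theta(0,x) = 0$ is immediate. The cocycle identity in item (2) is a one-line calculation: writing $\int_0^{s+t} = \int_0^s + \int_s^{s+t}$ and substituting $v = u-s$ in the second piece together with the group law $\psi_u = \psi_v \circ \psi_s$ converts it to $\int_0^t a(\psi_v(\psi_s x))\,dv = \theta(t, \psi_s x)$.

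Integrability is a routine Fubini computation using invariance of $\mu$: for each fixed $t$,
\[
\int_\Omega \theta(t,x)\, d\mu(x) \;=\; \int_0^t \int_\Omega a(\psi_s x)\, d\mu(x)\, ds \;=\; t\, E_\mu(a) \;<\; \infty.
\]

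The main obstacle is the divergence half of item (3): the requirement that $\theta(t, x) \to +\infty$ as $t \to +\infty$ (and symmetrically to $-\infty$) on a $\psi_t$-invariant set of full $\mu$-measure. Here I would invoke the Birkhoff pointwise ergodic theorem for flows, which produces a $\psi_t$-invariant integrable function $a^*$ with $\frac{1}{T}\theta(T, x) \to a^*(x)$ for $\mu$-a.e.\ $x$ as $T \to +\infty$, and $\int_\Omega a^*\, d\mu = E_\mu(a) > 0$. In the setting in which Lemma~\ref{lemma:4} will be applied below, $\mu$ is ergodic, so $a^* \equiv E_\mu(a) > 0$ almost everywhere and hence $\theta(T, x) \sim T\, E_\mu(a) \to +\infty$ on a full-measure set; an identical argument for negative time, intersected with $A_0$ and passed to its $\psi_t$-invariant hull, delivers the invariant carrier $A$ with $\mu(A) = 1$ and $\mu(\Omega \setminus A) = 0$ demanded by Definition~\ref{definition:2}. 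Without some such positivity of $a^*$ the divergence can genuinely fail, so this ergodic-theoretic input is the crux of the argument.
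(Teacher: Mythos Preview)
The paper does not actually prove this lemma; it simply refers the reader to Theorem~3.1 of \cite{tot}. So there is no in-text argument to compare against, and your sketch stands on its own. The verifications you give for continuity, monotonicity, the cocycle identity, and integrability are all correct and standard; your use of Birkhoff for the divergence clause is the right tool.

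Your hedge about the non-ergodic case deserves to be sharpened: it is not merely that Birkhoff is more convenient under ergodicity, but that the lemma as stated is \emph{false} without it. If $\Omega$ splits into two $\psi_t$-invariant sets of positive measure and $a$ vanishes identically on one of them, then $E_\mu(a)>0$ is still possible while $\theta(t,x)\equiv 0$ on a set of positive measure, violating item~(3) of Definition~\ref{definition:2}. Since the only place the paper invokes Lemma~\ref{lemma:4} is in Theorem~\ref{theorem:4}, where $\bar\mu$ is ergodic, your restriction to that case is exactly what is needed; presumably Totoki's original statement carries an ergodicity or a.e.-positivity hypothesis that was elided here.
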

For a proof see Theorem 3.1 in \cite{tot}.

\begin{definition}\label{remark:1}
The  function $\theta(t, x)$ in Lemma \ref{lemma:4} is called the
{\em additive function defined by  $a(x)$}.
\end{definition}

Let $\psi_t$ be a  measurable flow on a Borel probability space
$(\Omega,\mathcal{B}, \mu)$ and let $\theta$ be an additive
function of $\psi_t$. Define
$$
\varphi _tx=\psi_{\tau(t, x)}x,
 \quad \textrm{with} \quad \tau(t, x)=\sup\{s: \theta(s, x)\leq t\},
$$
for all $-\infty <t<\infty$ and all $x\in A$, where $A$ is the
carrier of $\theta$. We call $\widehat{\Omega}=\{x\in A: \theta(t,
x)>0$, $\forall t>0\}$ the {\em regular set} of $\theta$; in other
words, the set of non-singular points of $\varphi_t$. Let
$\mathcal{\widehat{B}}=\mathcal{B}\cap \widehat{\Omega}$. Then
$\varphi_t: (\widehat{\Omega}, \mathcal{\widehat{B}})\rightarrow
(\widehat{\Omega}, \mathcal{\widehat{B}})$ is a measurable flow
(see Lemma 4.1 in \cite{tot}), which is called the {\em time changed
system} of $\psi_t$ by $\theta$.

Let $\mu$ be a $\psi_t$ invariant probability measure and $a(x)$ be
a non-negative, integrable function. Define
$$
\widehat{\mu}(B)=\int _B \, d\widehat{\mu}(x) = \int_B a(x) \, d\mu(x)
$$
for all $B\in \mathcal{B}$. We get  an invariant measure
$\widehat{\mu}$ of the time changed system $\varphi_t$ (see
(\cite{mar}).

\begin{lemma}\label{lemma:6}(\cite{tot})
If $\psi_t$ is ergodic then any time changed flow $\varphi_t$ of
$\psi_t$ is also ergodic.
\end {lemma}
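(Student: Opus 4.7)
The plan is to exploit the fact that $\psi_t$ and $\varphi_t$ trace out the same orbits, so that invariance of a set (or function) under one flow is essentially equivalent to invariance under the other. Since $\varphi_t x = \psi_{\tau(t,x)} x$ on $\widehat{\Omega}$, the $\varphi_t$-orbit of any point $x \in \widehat{\Omega}$ is a subset of its $\psi_t$-orbit, and because $\theta(\cdot, x)$ is continuous, nondecreasing and tends to $\pm\infty$, the map $t \mapsto \tau(t, x)$ hits all of $\mathbb{R}$ except possibly the interiors of the flat intervals of $\theta(\cdot, x)$; such flat intervals correspond to times $s$ where $a(\psi_s x) = 0$.

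Given a measurable $\varphi_t$-invariant set $B \subset \widehat{\Omega}$ with $\widehat{\mu}(B) > 0$, the goal is to show $\widehat{\mu}(B) = 1$. I would first promote $B$ to an essentially $\psi_t$-invariant set $\widetilde{B}$: since $x \in B$ forces $\psi_{\tau(t,x)}x \in B$ for every $t \in \mathbb{R}$, one already has $\psi_s x \in B$ for all $s$ in the range of $\tau(\cdot, x)$. The remaining $s$-values lie in the $\{a = 0\}$-saturation along the orbit, and I would argue these can be absorbed into a $\mu$-null set modification. Concretely, I would use Birkhoff's theorem for $\psi_t$ applied to $\chi_{\{a>0\}}$: because $E_\mu(a) > 0$, for $\mu$-a.e.\ $x$ the $\psi$-orbit meets $\{a > 0\}$ with positive time-density, so the "missed" part of the orbit has density zero and a standard density-point argument lets us redefine $B$ on a $\mu$-null set to obtain a genuinely $\psi_t$-invariant set $\widetilde{B}$ with $\widehat{\mu}(B \triangle \widetilde{B}) = 0$.

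Once $\widetilde{B}$ is $\psi_t$-invariant, ergodicity of $\psi_t$ with respect to $\mu$ gives $\mu(\widetilde{B}) \in \{0, 1\}$. Since $d\widehat{\mu} = a \, d\mu$ is absolutely continuous with respect to $\mu$, we conclude
\[
\widehat{\mu}(B) = \widehat{\mu}(\widetilde{B}) = \int_{\widetilde{B}} a(x) \, d\mu(x) \in \{0, \,E_\mu(a)\},
\]
which (after normalizing so that $\widehat{\mu}$ is a probability measure) is $0$ or $1$, giving ergodicity of $\varphi_t$ under $\widehat{\mu}$.

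The main obstacle is the subtle step of turning $\varphi$-invariance into $\psi$-invariance mod null sets when $a$ vanishes on a $\mu$-positive set: there $\theta(\cdot, x)$ has long flat intervals and $\tau(\cdot, x)$ genuinely jumps past pieces of the $\psi$-orbit. An alternative route, which I would pursue if the density argument proves delicate, is to work with the change-of-variables identity $\int_0^T g(\varphi_t x) \, dt = \int_0^{\tau(T,x)} g(\psi_s x) a(\psi_s x) \, ds$ and compare the two ergodic averages; passing to $T \to \infty$ and using $\tau(T,x)/T \to 1/E_\mu(a)$ a.e.\ by Birkhoff yields that $\widehat{\mu}$-a.e.\ ergodic averages of $g$ under $\varphi_t$ agree with constants coming from $\psi$-ergodicity, which directly establishes ergodicity of $\varphi_t$.
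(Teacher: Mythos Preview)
The paper does not supply its own proof of this lemma; it is simply quoted from Totoki \cite{tot}. So there is no argument in the paper to compare yours against, and your task reduces to whether the sketch stands on its own.

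Your first approach has a real gap, which you yourself flag near the end. The step ``the $\psi$-orbit meets $\{a>0\}$ with positive time-density, so the missed part of the orbit has density zero'' is a non sequitur: if $\mu(\{a=0\})>0$ then, by Birkhoff, the $\psi$-orbit of a typical point spends a \emph{positive} fraction of time in $\{a=0\}$, and the range of $\tau(\cdot,x)$ genuinely omits a positive-density set of $s$. In that regime a $\varphi$-invariant set need not be $\psi$-invariant modulo a $\mu$-null set, and the reduction to $\psi$-ergodicity via invariant sets does not go through as written. (In the paper's only application, Theorem~\ref{theorem:4}, one has $a=1/\widehat{\alpha}>0$ off a single point of $\bar\mu$-measure zero, so this obstruction happens to be vacuous there; but your argument is pitched at the general lemma.)

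Your second route is the one that works, and it is essentially the classical proof. From the change of variables
\[
\int_0^{\theta(S,x)} g(\varphi_t x)\,dt \;=\; \int_0^{S} g(\psi_s x)\,a(\psi_s x)\,ds,
\]
dividing by $S$ and applying Birkhoff's theorem for the ergodic flow $(\psi_t,\mu)$ to the integrands $g\cdot a$ and $a$ gives, for $\mu$-a.e.\ $x$,
\[
\frac{1}{\theta(S,x)}\int_0^{\theta(S,x)} g(\varphi_t x)\,dt \;\longrightarrow\; \frac{E_\mu(ga)}{E_\mu(a)}\,,
\]
a constant. Since $\theta(S,x)\to\infty$ continuously, this yields convergence of $\varphi$-time averages to a constant for $\mu$-a.e.\ $x$, hence for $\widehat\mu$-a.e.\ $x$ by absolute continuity, and ergodicity of $(\varphi_t,\widehat\mu)$ follows. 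If you drop the first approach and present only this one, the proof is complete.
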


The following is Theorem 10.1  of \cite{tot}.

\begin{theorem}\label{theorem:2}
Let $\psi_t$ be an arbitrary measurable flow on a Borel probability
space $(\Omega, \mathcal{B}, \mu)$, and let $\theta$ be any
integrable additive function of $\psi_t$. If the flow $\varphi_t$ on
$(\widehat{\Omega}, \widehat{\mathcal{B}}, \widehat{\mu})$ is the
time changed flow of $\phi_t$ by $\theta$, then we have the
inequality
$$
h_{\widehat{\mu}}(\varphi_t)\widehat{\mu}(\widehat{\Omega})
     \leq h_\mu(\psi_t)\mu(\Omega)
$$
for all fixed $t$, where $h_{\widehat{\mu}}(\varphi_t)$ and
$h_\mu(\psi_t)$ denote the measure-theoretic entropies of the
homeomorphisms $\varphi_t $ and $\psi_t$, respectively. The
equality holds when $\psi_t$ is ergodic.
\end{theorem}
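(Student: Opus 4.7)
The plan is to follow the classical Abramov-style argument: represent both $\psi_t$ and $\varphi_t$ as special flows (flows under a function) over a common cross-section, and then compare the two ceilings by means of Abramov's formula.

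Assume first that $\mu$ is ergodic and $\psi_t$ has no fixed component (the fixed-point case makes both sides of the inequality zero). By the Ambrose-Kakutani theorem I would select a Borel cross-section $X\subset A$ with first-return map $T:X\to X$, $T$-invariant probability $\nu$ on $X$, and a Borel ceiling $r:X\to(0,\infty)$, so that $(\Omega,\mu,\psi_t)$ is measurably isomorphic to the special flow built from $(X,T,\nu,r)$, with $\mu$ corresponding to $(\nu\otimes\mathrm{Leb})/\int r\,d\nu$. Abramov's formula then reads
$$
h_\mu(\psi_1) \;=\; \frac{h_\nu(T)}{\int_X r\,d\nu}.
$$

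The essential observation is that the $\varphi_t$-orbit of any point coincides with its $\psi_t$-orbit as a set, so the same $X$ is a cross-section for $\varphi_t$ with the same return map $T$ and base measure $\nu$; only the ceiling changes. Solving $\varphi_{\widehat r(x)}(x)=Tx$ via $\varphi_t=\psi_{\tau(t,x)}$ yields
$$
\widehat r(x) \;=\; \theta(r(x),x) \;=\; \int_0^{r(x)} a(\psi_s x)\,ds,
$$
and a Fubini computation gives $\int_X \widehat r\,d\nu = \bigl(\int_X r\,d\nu\bigr)\,\widehat\mu(\widehat\Omega)$. A direct check identifies the $\varphi_t$-invariant probability of the suspension model, $(\nu\otimes\mathrm{Leb})/\int\widehat r\,d\nu$, with $\widehat\mu/\widehat\mu(\widehat\Omega)$. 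Applying Abramov's formula to this second special flow gives
$$
h_{\widehat\mu}(\varphi_1) \;=\; \frac{h_\nu(T)}{\int_X \widehat r\,d\nu},
$$
and multiplying by $\widehat\mu(\widehat\Omega)$ produces $h_\nu(T)/\int_X r\,d\nu = h_\mu(\psi_1)$, which is exactly the claimed equality in the ergodic case.

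For general $\mu$ I would pass to the ergodic decomposition $\mu=\int\mu_\omega\,d\lambda(\omega)$. On each ergodic component the previous step gives the exact identity $h_{\widehat\mu_\omega}(\varphi_1)\widehat\mu_\omega(\widehat\Omega)=h_{\mu_\omega}(\psi_1)\mu_\omega(\Omega)$. Integrating over $\omega$ recovers $h_\mu(\psi_1)\mu(\Omega)$ exactly on the right, but on the left the ergodic decomposition of $\widehat\mu$ carries the non-uniform weights $E_{\mu_\omega}(a)$, so only a one-sided affinity for the entropy functional applies, yielding the inequality as stated. The main obstacle I foresee lies in the Ambrose-Kakutani step: one must arrange the cross-section so that both $r$ and $\widehat r$ are Borel and $\nu$-integrable (the latter following from integrability of $\theta$), handle the possibility that $\psi_t$ is not everywhere aperiodic, and verify that the measurable isomorphism with the suspension model pushes $\widehat\mu$ forward to the expected weighted product measure. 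Once that measure-theoretic infrastructure is in place, the entropy comparison reduces to the two Abramov identities above.
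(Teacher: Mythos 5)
The paper does not prove this theorem itself; it is quoted verbatim as Theorem~10.1 of Totoki \cite{tot}, with no in-paper argument. So there is no ``paper's own proof'' to compare yours against, and Totoki's original proof proceeds by a direct analysis of additive functions and partitions rather than by your Ambrose--Kakutani/Abramov route. That said, your approach is a legitimate and conceptually clean alternative in the ergodic case: because $\psi_t$ and $\varphi_t$ have identical orbits, one cross-section $X$ with return map $T$ and base measure $\nu$ serves both flows; the ceiling changes from $r$ to $\widehat r(x)=\theta(r(x),x)$; the Fubini identity $\int_X\widehat r\,d\nu=\bigl(\int_X r\,d\nu\bigr)\widehat\mu(\widehat\Omega)$ holds; and two applications of Abramov's formula yield the equality.

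Where I would push back is on the non-ergodic step. The measure-theoretic entropy functional $\mu\mapsto h_\mu(\psi_1)$ is genuinely affine over the ergodic decomposition (an equality, not a one-sided bound), so ``one-sided affinity'' is not the mechanism producing the inequality. If you decompose $\mu=\int\mu_\omega\,d\lambda(\omega)$ and apply the ergodic-case identity to each component, then reassemble using the reweighted decomposition of $\widehat\mu$ (with weights proportional to $E_{\mu_\omega}(a)$), you get equality termwise and after integration --- unless some components are effectively dropped, e.g.\ because $E_{\mu_\omega}(a)=0$ so those points lie outside the carrier $A$ and the regular set $\widehat\Omega$. That dropping of components is the actual source of a possible strict inequality, and you should say so explicitly rather than appeal to a nonexistent one-sided affinity. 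You should also spell out the hypotheses under which Ambrose--Kakutani gives a Borel cross-section with $\nu$-integrable ceiling and check that the flow isomorphism really carries $\widehat\mu/\widehat\mu(\widehat\Omega)$ to the normalized product measure over $\widehat r$; these are the technical points on which the argument actually rests.
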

This theorem will be used to establish that the entropy of $\widehat{\varphi}$
is non-zero, by exhibiting an ergodic $\mu$ with positive entropy and
establishing that $\hat{\mu}$ is finite.

\begin{definition}
An ergodic measure is {\em atomic} if it supported on a periodic orbit.
\end{definition}

\begin{lemma}\label{lemma:3}
Assume that $X$ is a $C^1$ vector field on $\Omega$ and $\alpha\in
C^1(\Omega, [0,1])$ satisfies  {\bf (H)} (see \S 1). Denote by $\varphi_t$ the flow
induced by the vector field $\alpha X$ on $\Omega$. For any $x\in M$,
define $\gamma (x)$ by:
$$
\left\{
\begin{array}{ll}
\varphi_{\gamma(x)}(x,0)=\psi_1(x,0)=(f(x), 0),
        & \ \ (x,0)\neq f^{-1}(p)\ and\ (x,0)\neq p\,;\\
\gamma(x)=+\infty,
        & \ \  (x,0)= f^{-1}(p)\ or\ (x,0)= p.
\end {array}
\right.
$$
Then for any non-atomic  ergodic measure $\bar{\mu}$ of
$\varphi_t$, there exists a non-atomic ergodic measure $\mu$ of
$f$ such that
\begin{equation}\label{eq:9}
E_{\bar{\mu}}(\xi)
  = \frac{1}{E_{\mu}(\gamma)}E_{\mu} \left(\int_{0}^{\gamma(x)}
                \xi(\varphi_t(x,0)) \,dt \right),
       \quad \forall \xi\in C^0(\Omega),
\end{equation}
where $E_{\bar{\mu}}(\xi) = \int_{\Omega}\xi \, d\bar{\mu}$
and $E_{\mu}(\gamma) = \int_M \gamma(y) \, d\mu (y)$.
\end {lemma}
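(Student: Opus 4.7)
The plan is to reverse the time change from $\varphi_t$ back to the suspension flow $\psi_t$, thereby turning $\bar\mu$ into a $\psi_t$-invariant ergodic measure $\tilde\mu$. Since $\psi_t$ is a constant-roof suspension of $f$, $\tilde\mu$ automatically disintegrates as $\mu\otimes Leb_{[0,1]}$ for a unique ergodic $f$-invariant probability $\mu$ on $M$, and identity (\ref{eq:9}) drops out of a direct change of variables in the time parameter.

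First I would show that the singular set $S=\{p\}\cup\{\varphi_t(f^{-1}(p),0):t\in\mathbb{R}\}$ is $\bar\mu$-null. Both components are $\varphi_t$-invariant. The set $\{p\}$ can support only $\delta_p$, which is atomic; the orbit through $(f^{-1}(p),0)$ is non-periodic, because minimality of $f$ rules out fixed points, and its time parameterization identifies it with $\mathbb{R}$, on which no finite translation-invariant measure exists---so it cannot support a finite $\varphi_t$-invariant probability either. Ergodicity together with non-atomicity therefore force $\bar\mu(S)=0$, and in particular $\alpha>0$ $\bar\mu$-a.e.

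Next I would define $\tilde\mu:=Z^{-1}\alpha\,\bar\mu$ with $Z:=\int_\Omega\alpha\,d\bar\mu>0$. The standard correspondence for time-changed measurable flows (cf.\ \cite{tot},\cite{mar}) gives $\psi_t$-invariance of $\tilde\mu$; because $\psi_t$ and $\varphi_t$ share orbits and hence invariant Borel sets, ergodicity of $\bar\mu$ transfers to $\tilde\mu$. As $\psi_t$ is the standard suspension of $f$ with roof $\equiv 1$, disintegration along the suspension fibres yields $\tilde\mu=\mu\otimes Leb_{[0,1]}$ for a unique ergodic $f$-invariant probability $\mu$ on $M$. If $\mu$ were carried on a periodic orbit of $f$ then $\bar\mu$ would be carried on the corresponding periodic $\varphi_t$-orbit, contradicting non-atomicity, so $\mu$ is non-atomic.

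Finally, $d\bar\mu=(Z/\alpha)\,d(\mu\otimes Leb)$ together with $\bar\mu(\Omega)=1$ pins down
$$
Z^{-1}=\int_M\int_0^1\frac{du}{\alpha(\psi_u(x,0))}\,d\mu(x)=\int_M\gamma(x)\,d\mu(x)=E_\mu(\gamma),
$$
so $\gamma$ is automatically $\mu$-integrable. Substituting this into $E_{\bar\mu}(\xi)=E_\mu(\gamma)^{-1}\int_M\int_0^1\xi(\psi_u(x,0))/\alpha(\psi_u(x,0))\,du\,d\mu(x)$ and applying the fibrewise change of variable $s=\theta(u,(x,0))$, which maps $[0,1]_u$ bijectively onto $[0,\gamma(x)]_s$ with $du=\alpha(\varphi_s(x,0))\,ds$ and $\psi_u(x,0)=\varphi_s(x,0)$, turns the inner integral into $\int_0^{\gamma(x)}\xi(\varphi_s(x,0))\,ds$, which is exactly (\ref{eq:9}). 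The step I expect to require the most care is justifying the time-change correspondence $\bar\mu\leftrightarrow\tilde\mu$ rigorously in spite of the singularity at $p$, where the Radon--Nikodym density $1/\alpha$ blows up; this is precisely why step one, removing the $\bar\mu$-null singular set $S$, is essential, since on $\Omega\setminus S$ the time change is a genuine $C^1$ reparameterization of the time variable along each orbit.
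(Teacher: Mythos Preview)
The paper does not actually prove this lemma: it simply states ``The proof is similar to Lemma 4 in \cite{ohno} and thus omitted.'' Your plan---reverse the time change to convert $\bar\mu$ into the $\psi_t$-ergodic probability $\tilde\mu=Z^{-1}\alpha\,\bar\mu$, disintegrate $\tilde\mu$ along the constant-roof suspension to get an $f$-ergodic $\mu$ on $M$, and then perform the fibrewise change of variable $u\mapsto s$ with $du=\alpha(\varphi_s(x,0))\,ds$---is correct and is the standard argument, presumably the one Ohno gives. The computation $Z^{-1}=\int_M\int_0^1\alpha(\psi_u(x,0))^{-1}\,du\,d\mu=E_\mu(\gamma)$ is exactly right, and the substitution yields (\ref{eq:9}) as you say.

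One small correction: your singular set $S$ is missing one orbit. Writing $p=(x_0,0)$, there are \emph{two} $\varphi_t$-orbits asymptotic to $p$: the orbit through $(f^{-1}(x_0),0)$, which tends to $p$ as $t\to+\infty$, and the orbit through $(f(x_0),0)$, which tends to $p$ as $t\to-\infty$. On the latter the additive function $\Theta(s,x)=\int_0^s\alpha(\varphi_r x)\,dr$ stays bounded below as $s\to-\infty$, so it must also be excised before invoking the Totoki/Maruyama correspondence. The same argument you give (no finite translation-invariant measure on a copy of $\mathbb{R}$) handles it, so this is a one-line fix.
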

The proof is similar to Lemma 4 in \cite{ohno} and
thus omitted.

\begin{corollary}\label{corollary:1}
If $E_{\mu}(\gamma)=+\infty$ for all non-atomic  ergodic measures
$\mu$ of $f$, then $\varphi_t$ has only atomic  invariant Borel
probability measures.
\end{corollary}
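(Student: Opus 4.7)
The plan is a direct reduction to Lemma \ref{lemma:3} by contradiction. Suppose $\varphi_t$ admits a non-atomic ergodic invariant Borel probability measure $\bar{\mu}$. Then Lemma \ref{lemma:3} supplies a non-atomic ergodic measure $\mu$ of $f$ such that the integral identity (\ref{eq:9}) holds for every continuous $\xi$ on $\Omega$.

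The decisive step is to test (\ref{eq:9}) against the constant function $\xi \equiv 1$. The left-hand side is $E_{\bar{\mu}}(1) = \bar{\mu}(\Omega) = 1$, while the right-hand side collapses to $\frac{1}{E_\mu(\gamma)} E_\mu(\gamma)$. For this quotient to be well-defined and to equal $1$ one must have $E_\mu(\gamma) < \infty$. But the standing hypothesis of the corollary says precisely that $E_\mu(\gamma) = +\infty$ for every non-atomic ergodic measure $\mu$ of $f$; this contradicts what Lemma \ref{lemma:3} has just produced.

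Hence no non-atomic ergodic $\varphi_t$-invariant probability measure can exist. Every ergodic invariant Borel probability measure of $\varphi_t$ is therefore atomic in the sense of Definition 1.8; because $\alpha$ vanishes only at $p$ under \textbf{(H)}, the flow $\varphi_t$ inherits the orbits of the minimal suspension $\psi_t$ except that $p$ becomes a fixed point, so $\delta_p$ is in fact the only such measure. By ergodic decomposition every $\varphi_t$-invariant Borel probability measure is then a combination of atomic ergodic pieces, which gives the claimed conclusion. The only mildly delicate point, and really the only obstacle, is recognizing that Lemma \ref{lemma:3} implicitly carries the finiteness condition $E_\mu(\gamma) < \infty$ inside its normalizing factor; once this is observed the corollary is a one-line consequence.
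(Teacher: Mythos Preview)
Your argument is essentially correct, but it leans on a delicate point that the paper's proof takes pains to avoid. When you plug in $\xi\equiv 1$, the inner integral on the right of (\ref{eq:9}) is $\int_0^{\gamma(x)}1\,dt=\gamma(x)$, so the right-hand side becomes $E_\mu(\gamma)/E_\mu(\gamma)$. Under the hypothesis $E_\mu(\gamma)=+\infty$ this is the indeterminate form $\infty/\infty$, and your contradiction rests on the claim that Lemma~\ref{lemma:3} guarantees a \emph{well-defined} real number on the right for every $\xi$. That is a reasonable reading of the lemma, but since the paper omits the proof of Lemma~\ref{lemma:3} (deferring to Ohno), one cannot simply read off $E_\mu(\gamma)<\infty$ from its statement without inspecting that proof.

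The paper's own argument sidesteps this entirely. Instead of $\xi\equiv 1$, it chooses a continuous $\xi:\Omega\to[0,1]$ supported in a small set $U$ bounded away from $p$, with $\xi\equiv 1$ on a smaller set $V$ of positive $\bar\mu$-measure. Because each $\varphi$-orbit segment $\{\varphi_t(x,0):0\le t\le\gamma(x)\}$ spends at most a uniformly bounded time $\tau$ inside $\overline U$, the numerator $E_\mu\bigl(\int_0^{\gamma(x)}\xi(\varphi_t(x,0))\,dt\bigr)$ is finite (indeed $\le\tau$). Dividing by $E_\mu(\gamma)=+\infty$ then gives an unambiguous $0$ on the right, while $E_{\bar\mu}(\xi)\ge\bar\mu(V)>0$ on the left --- a clean contradiction with no indeterminate forms. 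Your route is shorter, and valid once one accepts that (\ref{eq:9}) forces finiteness of the normalizing constant; the paper's route is longer but self-contained.
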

\begin{proof}
Otherwise,  $\varphi_t$ should have  an non-atomic
invariant Borel probability measure $\bar{\mu}$. By Lemma
\ref{lemma:3}, there exists an non-atomic ergodic measure
$\mu$ of $f$ satisfying equation (\ref{eq:9}).

On the other hand, since $\bar{\mu}$ is non-atomic, one can
choose an open set $V\subset \Omega\setminus\{p\}$ such that
$\overline{V}\subset \Omega\setminus\{p\}$ and
$\bar{\mu}(V)>0$.  Let $U$ be an open set satisfying
$\overline{V}\subset U\subset \overline{U}\subset
\Omega\setminus\{p\}$. Without loss of generality, we can assume
that there exist two open sets $B_1\subset B_2\subset M$ and two
open intervals $I_1\subset I_2\subset [-1,1]$ such that
$$V=\{\psi_t(x,0)
:\ x\in B_1, t\in I_1\}\ \ and\ \ U=\{\psi_t(x,0):\ x\in B_2, t\in
I_2\}.$$ For any $q\in \overline{U}$, denote  $\tau(q)=\max\{t\in
\mathbb{R}^+:\ \varphi_{[0,t]}(q)\subset \overline{U}\}$. By the
continuity of $\varphi$ and the compactness of $\overline{U}$,
there exists $\tau>0$ such that $\tau(q)\leq \tau$ for any $q\in
\overline{U}$.

We define a continuous function $\xi:\ \Omega \to [0,1]$ such that
$\xi|_{V}\equiv 1$ and $\xi|_{\Omega\setminus U}\equiv 0$. Clearly,
$E_{\bar{\mu}}(\xi)>0$.

For $x\in B_2$, if $(x,0)\neq p$, we denote  $\tau_1(x)=\min\{t:\
\varphi_t(x, 0)\in \overline{U}\}$. If $x\in B_2$ and $(x,0)=p$,
let $\tau_1(x)=0$. By the definition of $\xi$, we  have
\begin{equation}
\begin{split}
E_{\mu}\left(\int_{0}^{\gamma(x)} \xi(\varphi_t(x,0)) \,dt\right)
 =& \int _{B_2}\int_{0}^{\gamma(x)}\xi(\varphi_t(x,0))\,dt\, d\mu\\
\leq&\int_{B_2}\int_{0}^{\tau_1(x)+\tau}\xi(\varphi_t(x,0))\,dt\, d\mu\\
=&\int _{B_2}\int_{0}^{\tau_1(x)}\xi(\varphi_t(x,0))\,dt\, d\mu \\
 & +
\int _{B_2}\int_{\tau_1(x)}^{\tau_1(x)+\tau}\xi(\varphi_t(x,0))\,dt\, d\mu\\
=&\int
_{B_2}\int_{\tau_1(x)}^{\tau_1(x)+\tau}\xi(\varphi_t(x,0))\,dt\,
d\mu\\
\leq&\tau.
\end{split}
\end{equation}
Thus $\frac{1}{E_{\mu}(\gamma)}E_{\mu} \left(\int_{0}^{\gamma(x)}
\xi(\varphi_t(x,0)) \,dt \right)=0$. This implies that $\mu$ and
$\bar\mu$ do not satisfy  equation (\ref{eq:9}). So a
contradiction is  deduced.
\end{proof}

Lemma~\ref{lemma:2} will be used to show that $E_{\mu}(\gamma)=+\infty$
for $\varphi$ and then Corollary~\ref{corollary:1} will imply that 
$\varphi$ has only atomic invariant probability measures and thus 
has zero entropy.

\subsection{Measure theoretic entropies  of $f$ and its standard suspension $\psi_t$}

First we present a link between the invariant measures of $f$ and $\psi_t$.

\begin{lemma}\label{lemma:8}
For any invariant probability measure $\mu$ of $f$ on $M$, we
define
$$
\int_{\Omega} \xi \, d\bar{\mu}
    := \int_M \int_0^1 \xi(x,t) \, dt d\mu, \quad  \forall \xi\in C^0(\Omega),
$$
then $\bar{\mu}$ is an invariant measure of $\psi_t$ on $\Omega$.
Further, $\bar{\mu}$ is ergodic if $\mu$ is ergodic.
\end{lemma}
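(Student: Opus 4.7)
The plan is to verify the two assertions in turn, working with the representation $\bar\mu$ on $M\times [0,1]$ (modulo the identification) as a product of $\mu$ and Lebesgue measure on $[0,1]$.

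For $\psi_t$-invariance of $\bar\mu$, by the group property of the flow it suffices to check the defining identity $\int_\Omega \xi\circ\psi_t\,d\bar\mu=\int_\Omega \xi\,d\bar\mu$ for all $\xi\in C^0(\Omega)$ and all $t\in(0,1)$. For such $t$, I would split the inner integral according to whether $s+t<1$ or $s+t\ge 1$, so that
\begin{equation*}
\int_M\!\int_0^1 \xi(\psi_t(x,s))\,ds\,d\mu
   = \int_M\!\int_0^{1-t}\!\xi(x,s+t)\,ds\,d\mu
   + \int_M\!\int_{1-t}^1\!\xi(f(x),s+t-1)\,ds\,d\mu.
\end{equation*}
A change of variable converts these to $\int_M\!\int_t^1\xi(x,u)\,du\,d\mu$ and $\int_M\!\int_0^t\xi(f(x),u)\,du\,d\mu$ respectively. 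The $f$-invariance of $\mu$ turns the second into $\int_M\!\int_0^t\xi(x,u)\,du\,d\mu$, and adding the two recovers $\int_\Omega \xi\,d\bar\mu$.

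For the ergodicity statement I would argue in terms of invariant functions rather than invariant sets, as this avoids delicate bookkeeping at the identification $(y,1)\sim(f(y),0)$. Let $g\in L^2(\bar\mu)$ be $\psi_t$-invariant for every $t$. For each fixed small $t>0$ the relation $\psi_t(x,s)=(x,s+t)$ (when $s+t<1$) combined with a Fubini argument shows that, for $\mu$-a.e.\ $x$, the slice $s\mapsto g(x,s)$ is constant on $[0,1)$; call its value $\tilde g(x)$. Choosing $t$ so that $s+t$ crosses $1$ together with the identification gives $\tilde g(f(x))=\tilde g(x)$ for $\mu$-a.e.\ $x$. Ergodicity of $\mu$ under $f$ then forces $\tilde g$, and hence $g$, to be constant $\bar\mu$-a.e.

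The routine step is the invariance computation; the only real subtlety is the ergodicity argument, where care is needed to pass from the null sets produced by Fubini for each fixed $t$ to a single $\mu$-null exceptional set independent of $t$. I would handle this by testing $\psi_t$-invariance against a countable dense family of $t$'s in $(0,1)$, deducing that the slice $s\mapsto g(x,s)$ agrees $\mathrm{Leb}$-a.e.\ with a single constant for $\mu$-a.e.\ $x$, and then applying the identification step at $t=1$ to obtain the $f$-invariance of $\tilde g$.
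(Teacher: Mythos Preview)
The paper omits the proof entirely, calling it ``elementary,'' so there is nothing to compare against; your plan is a correct way to fill in the details and follows the standard route for special flows with constant roof.

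The invariance computation is fine. For ergodicity, working with $\psi_t$-invariant $L^2$ functions is the right choice, and your countable-dense-$t$ device does control the null sets. One small point: the sentence ``deducing that the slice $s\mapsto g(x,s)$ agrees Leb-a.e.\ with a single constant'' is not quite immediate from invariance under a countable set of shifts, since for two points $s_1,s_2$ in the good slice the difference $s_2-s_1$ need not lie in your countable set. A one-line fix is a Lebesgue density argument (if the slice took two values on sets of positive measure, a rational shift would force overlap and a contradiction). Alternatively, and perhaps cleaner, you can bypass the null-set bookkeeping altogether by replacing $g$ with the pointwise representative
\[
\tilde g(q)=\limsup_{\epsilon\to 0}\frac{1}{\epsilon}\int_0^\epsilon g(\psi_u q)\,du,
\]
which equals $g$ $\bar\mu$-a.e.\ by Lebesgue differentiation along flow lines and is literally $\psi_t$-invariant for every $t$; then the slice is genuinely constant and the identification at $t=1$ gives $f$-invariance of $x\mapsto\tilde g(x,0)$ directly. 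Either way the remaining details are routine, consistent with the paper's assessment.
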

The proof is
elementary and omitted.

That topological entropies  of a homeomorphism  and its standard
suspension coincide is a well known fact, see e.g. \cite{bow}. The
same is true for measure theoretic entropy, although, to our best
knowledge, we could  not find the original proof in literature.
For convenience of the reader we present a proof (see Proposition
\ref{th:5} below) using Katok's definition \cite{kat} of metric entropy.

For given $x\in M,$ $n\in \mathbb{N}$ and $\varepsilon>0$, we set
$$
D(x, n, \varepsilon, f)=\{y\in M :
     d(f^ix, f^iy)<\varepsilon, i=0, 1, \ldots, n-1\}
$$
and call it an $(n, \varepsilon, f)$-ball.

\begin{definition}\label{definition:4}(\cite{kat})
Given an ergodic measure $\mu$ on  $f:M\to M$ and a real
$\delta\in (0, 1)$, let $R(\delta, n, \varepsilon, f)$ denote the
smallest number of $(n, \varepsilon, f)$-balls needed to cover a
set whose $\mu$-measure is greater than $1-\delta$. Then the {\em
measure theoretic entropy of $f$}, denoted by $h_{\mu}(f)$, is
defined by
$$
h_{\mu}(f):=\lim\limits_{\varepsilon\to 0}\limsup\limits_{n\to
\infty}\frac{1}{n} \ln R(\delta, n, \varepsilon, f).
$$
\end{definition}
The quantity  $h_\mu(f)$ in Definition \ref {definition:4} is
independent of the choice of $\delta $ (see \cite{kat}). In a
similar way we define measure theoretic entropy for   $\psi$ as
follows. For $q\in \Omega$, $t\in \mathbb{R}$ and $\varepsilon>0$,
we define a $(t, \varepsilon, \psi)$-ball
$$
D(q, t, \varepsilon, \psi) =
    \{w\in \Omega: d(\psi_sw, \psi_sq)<\varepsilon,
         0\leq s\leq t\}.
$$
Given an ergodic measure $\mu$ of $\psi$ and $\delta\in (0, 1)$,
let $R(\delta, t, \varepsilon, \psi)$ denote the smallest number
of $(t, \varepsilon, \psi)$-balls needed to cover a set whose
$\mu$-measure is greater than $1-\delta$. Then the measure
theoretic entropy of $\psi$, denoted by $h_{\mu}(\psi)$, is
defined by
$$
h_{\mu}(\psi):=\lim\limits_{\varepsilon\to 0} \limsup\limits_{t\to
\infty}\frac{1}{t}\ln R(\delta, t, \varepsilon, \psi).
$$
One shows easily that $h_\mu(\psi)$ defined in this way coincides
with $h_\mu(\psi_1)$, the measure theoretic entropy for the time
one map.

\begin{prop}\label{th:5}
Let $\mu$ be a ergodic measure of $f$ on $M$, then we have
$$
h_{\bar{\mu}}(\psi)=h_{\mu}(f),
$$
where  $\bar{\mu}$ is defined as in Lemma \ref{lemma:8}.
\end{prop}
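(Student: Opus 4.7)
The plan is to establish both inequalities $h_{\bar\mu}(\psi_1) \leq h_\mu(f)$ and $h_\mu(f) \leq h_{\bar\mu}(\psi_1)$, since the equivalence of the flow and time-one-map measure entropies was noted just above. Both directions will translate between $(n,\varepsilon,f)$-balls in $M$ and $(n,\varepsilon,\psi)$-balls in $\Omega$ using a single geometric input: because $\Omega$ is a smooth compact Riemannian suspension, in each fundamental chart of the form $U\times J \subset M\times [0,1]$ the intrinsic metric $d_\Omega$ is uniformly equivalent to the product metric, i.e.\ there exist constants $0<c\leq C$ with
$$
c\bigl(d_M(x,y)+|s-t|\bigr) \;\leq\; d_\Omega\bigl((x,s),(y,t)\bigr) \;\leq\; C\bigl(d_M(x,y)+|s-t|\bigr).
$$
A consequence is that the time-$r$ maps $\psi_r$, $r\in [0,1]$, are uniformly Lipschitz on $\Omega$.

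For the upper bound I would proceed as follows. Fix $\varepsilon>0$ and choose $\varepsilon'>0$ sufficiently small. Given $n$, cover a set $A\subset M$ of $\mu$-measure $>1-\delta$ by $R(\delta,n,\varepsilon',f)$ of the $(n,\varepsilon',f)$-balls $D(x_i,n,\varepsilon',f)$. Partition $[0,1]$ into $N=N(\varepsilon)$ intervals $I_1,\dots,I_N$ of length small compared with $\varepsilon/C$. Since $\bar\mu=\mu\otimes\mathrm{Leb}$ by Lemma~\ref{lemma:8}, the set $A\times [0,1]$ has $\bar\mu$-measure $>1-\delta$. Each product $D(x_i,n,\varepsilon',f)\times I_j$, suitably lifted, is contained in a single $(n,\varepsilon,\psi)$-ball, using the Lipschitz control of $\psi_r$ and the metric equivalence. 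Hence $R(\delta,n,\varepsilon,\psi) \leq N\cdot R(\delta,n,\varepsilon',f)$; taking $\tfrac{1}{n}\log$ and letting $n\to\infty$ (then $\varepsilon\to 0$) gives the inequality, because $N$ does not depend on $n$.

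For the lower bound I would start with $(n,\varepsilon',\psi)$-balls covering a set $E\subset \Omega$ with $\bar\mu(E)>1-\delta$. Fubini applied to $\bar\mu=\mu\otimes\mathrm{Leb}$ produces a level $s_0\in [0,1]$ with $\mu(E_{s_0})>1-\delta$, where $E_{s_0}=\{x\in M:(x,s_0)\in E\}$. Identifying $M\times\{s_0\}$ with $M$ via $\psi_{-s_0}$ and using the metric equivalence, each $(n,\varepsilon',\psi)$-ball intersected with this slice projects into an $(n,\varepsilon,f)$-ball in $M$. This gives $R(\delta,n,\varepsilon,f) \leq R(\delta,n,\varepsilon',\psi)$, and the desired inequality after taking limits.

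The main obstacle I foresee is a careful treatment of the metric on $\Omega$ near the identification $(y,1)\sim (f(y),0)$, where the naive product-coordinate picture breaks down. This is handled by covering $\Omega$ with finitely many flow-box charts in which the uniform metric comparison above holds, and by absorbing the bounded distortion produced by one application of $f$ into the constants $c,C$. Once this uniform comparison is in hand, the translation between $(n,\varepsilon,f)$-balls and $(n,\varepsilon,\psi)$-balls is a routine exercise and the limit arguments close the proof.
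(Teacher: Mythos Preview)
Your proposal is correct and follows the same core idea as the paper: compare Bowen balls in $M$ and in $\Omega$ via the product structure of the suspension, absorbing a bounded factor coming from the time direction. The paper's execution is a bit more direct, however. Rather than invoking a metric-comparison constant and Lipschitz bounds for $\psi_r$, it simply defines an $(n,\varepsilon,\psi)$-\emph{box} as $\widetilde D((x,t),n,\varepsilon,\psi)=D(x,n,\varepsilon,f)\times(t-\varepsilon,t+\varepsilon)$ and observes (using $\psi_i(y,s)=(f^iy,s)$) that this coincides with the continuous-time Bowen ball; this sidesteps your concern about the metric near the gluing. For the inequality $h_{\bar\mu}(\psi)\ge h_\mu(f)$ the paper projects the covered set $A\subset\Omega$ to $\pi A\subset M$ and uses $\bar\mu(A)\le\mu(\pi A)$, which is simpler than your Fubini slice argument (though yours is also fine). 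For the reverse inequality both arguments are identical: tile $[0,1]$ into $\lceil 1/\varepsilon\rceil$ pieces and multiply the count. So your route works; the paper's just avoids the metric-equivalence detour by exploiting the exact product form of $\psi_i$.
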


\begin{proof}
For given $q=(x, t)\in \Omega$, $n\in
\mathbb{N}$ and $\varepsilon>0$, we set
$$
D(q, n, \varepsilon, \psi)=\{w\in \Omega :
            d(\psi_iw, \psi_iq)<\varepsilon,\ i=0, 1, \ldots, n-1\}
$$
and
$$
\widetilde{D}(q, n, \varepsilon, \psi)=\{(y, s):
          y\in D(x,n,\varepsilon,f),\ t-\varepsilon<s<t+\varepsilon\},
$$
where $D(x, n, \varepsilon, f)$ is  defined  before Definition
\ref{definition:4}. Then by Definition \ref{definition:1} it
follows that
$$
\widetilde{D}(q, n, \varepsilon, \psi)=\{w\in \Omega:
         d(\psi_\tau w,\psi_\tau q) < \varepsilon,\ 0\leq \tau\leq n\}.
$$
We call $\widetilde{D}(q, n, \varepsilon, \psi)$ an $(n,
\varepsilon, \psi)$-box with center $q$. Let
$\widetilde{R}(\delta, n, \varepsilon, \psi)$ denote the smallest
number of $(n, \varepsilon, \psi)$-boxes needed to cover a set
whose $\bar{\mu}$-measure is greater than $1-\delta$. Then,
\begin{equation}\label{eq:6}
h_{\bar{\mu}}(\psi) = \lim\limits_{\varepsilon\to
0}\limsup\limits_{n\to \infty}\frac{1}{n}\ln \widetilde{R}(\delta,
n, \varepsilon, \psi).
\end{equation}
We will prove that $h_{\bar{\mu}}(\psi)= h_{\mu}(f)$ in two steps.

\medskip
\noindent
{\bf Step 1.}
For a subset $A\subset\Omega$, we denote
$$
\pi A=\{x\in M: (x, t)\in A, \textrm{for some } 0\leq t<1 \}.
$$
We have
$$
\bar{\mu}(A) = \int_M \int_0^1 \chi_A(x, t) \, dt \, d\mu
         \leq \int_M \chi_{\pi A}(x) \, d\mu
                = \mu(\pi(A)).
$$
Let $\varepsilon>0$ and $0<\delta<1$. If the boxes 
$\widetilde{D}(q_1,n,\varepsilon, \psi)$, 
$\widetilde{D}(q_2, n, \varepsilon, \psi)$, \ldots, $\widetilde{D}(q_{\widetilde{R}(\delta,n,\varepsilon,\psi)},n,\varepsilon,\psi)$ cover a set $A\subset \Omega$ and
$\bar{\mu}(A)\geq 1-\delta$, then $D(x_1, n, \varepsilon, f)$,
$D(x_2, n, \varepsilon, f)$, \ldots, 
$D(x_{\widetilde{R}(\delta,n,\varepsilon, \psi)}, n, \varepsilon, f)$ 
cover $\pi A$ and
$\mu(\pi A)\geq 1-\delta$, here $q_i=(x_i, t_i)$ for $i=1,
2,\ldots, \widetilde{R}(\delta, n, \varepsilon, \psi)$. 
So, $\widetilde{R}(\delta, n, \varepsilon \psi)\geq
R(\delta,n,\varepsilon,f)$ and thus $h_{\bar{\mu}}(\psi) \geq
h_{\mu}(f)$ by Definition \ref {definition:4} and (\ref {eq:6}).

\medskip
\noindent {\bf Step 2.} Let $\varepsilon>0$ and $0<\delta<1$. If
$(n, \varepsilon, f)$-balls
$$
D(x_1, n, \varepsilon, f), D(x_2, n, \varepsilon, f), \ldots,
D(x_{R(\delta, n, \varepsilon, f)}, n, \varepsilon, f)
$$
cover $B\subset M$ with $\mu(B)\geq 1-\delta$, then boxes
$$
\begin{array}{lll}
\widetilde{D}((x_1, 0), n, \varepsilon, \psi),& \ldots,&
\widetilde{D}((x_1, k(\varepsilon)), n, \varepsilon, \psi),\\
\widetilde{D}((x_2, 0), n, \varepsilon, \psi),& \ldots,&
\widetilde{D}((x_2, k(\varepsilon)), n, \varepsilon, \psi),\\ \ldots,&&\\
\widetilde{D}((x_{R(\delta, n, \varepsilon, f)}, 0), n,
\varepsilon, \psi),& \ldots,& \widetilde{D}((x_{R(\delta, n,
\varepsilon, f)}, k(\varepsilon)), n, \varepsilon, \psi)
\end{array}
$$
cover a set $A\subset \Omega$ with $\bar{\mu}(A)\geq 1-\delta,$
where $k(\varepsilon)$ is the smallest integer larger
than $\frac{1}{\varepsilon}$. So,
$$
\widetilde{R}(\delta, n, \varepsilon, \psi)\leq
k(\varepsilon)R(\delta, n, \varepsilon, f).
$$
Then by Definition~\ref{definition:4} and (\ref{eq:6})  we
conclude that $h_{\bar{\mu}}(\psi)\leq h_{\mu}(f)$.
\end{proof}


\section{Proof of Main Theorem}

Recall that $M$ denotes a smooth compact Riemannian manifold  of
dimension at least 2. In the rest of the paper we suppose that $f:
M\to M$ is  a $C^\infty$  minimal diffeomorphism, it preserves a
measure $\mu$ and  has positive measure-theoretic entropy
$h_{\mu}(f)>0$ (and thus has a positive topological entropy). See
\cite{her} for the existence of such a diffeomorphism. By the
variational principle we can assume that $\mu$ is an ergodic
measure.

As in Definition~\ref{definition:1} let $\Omega$ be the suspension
manifold with the standard differentiable suspension flow $\psi:
\Omega\times \mathbb{R}\to \Omega$. Denote by $X$ the $C^\infty$
vector field on $\Omega$ which induces $\psi$. We will construct a
function ${\alpha}\in C^\infty(\Omega,[0,1])$ and a $C^\infty$
vector field $Y=\alpha X$ on $\Omega$ with  zero topological
entropy in Theorem~\ref{theorem:3}. In Theorem~\ref{theorem:4}
we construct a function $\widehat{\alpha}\in
C^\infty(\Omega,[0,1])$ and corresponding $C^\infty$ vector field
$\widehat{Y}=\widehat{\alpha} X$ on $\Omega$ with positive
topological entropy. Both $\alpha$ and $\widehat{\alpha}$ will be
constructed satisfying condition {\bf(H)} and by
Proposition~\ref{remark:2} the two differentiable flows induced by
$Y$ and $\widehat{Y}$ are equivalent; thus completing the Main
Theorem.

Before we prove Theorem~\ref{theorem:3} below, we will need the following.
\begin{lemma}\label{lemma:7}
For a given a sequence of positive numbers
$1=\beta_{-1}>\beta_0>\beta_1>\cdots>\beta_i>\cdots$, there exists
a $C^\infty$ function $\omega: B^n(2)\rightarrow [0,1]$
such that
\begin{enumerate}
\item $\omega(x)=0$ if and only if $x=0$;
\item $\| \omega|_{B^n(\frac{1}{i+1})} \|_{C^0}
         \leq \beta_{i-1}$, $i=0, 1, 2, \ldots$ ;
\item $\omega|_{B^n(2) \setminus B^n(1)}=1$.
\end{enumerate}
\end{lemma}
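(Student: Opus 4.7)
\emph{Proof plan.} I would construct $\omega$ as a radial function, $\omega(x)=g(|x|)$, where $g\in C^\infty([0,2],[0,1])$ is non-decreasing, strictly positive on $(0,2]$, identically $1$ on $[1,2]$, and infinitely flat at $0$. The point is that infinite flatness of $g$ at the origin dominates the polynomial singularities of the derivatives of $|x|$ there and so guarantees that $\omega$ extends to a $C^\infty$ function on $B^n(2)$; the remaining radial conditions on $g$ then translate directly into the three conditions on $\omega$.

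To build $g$, first choose an auxiliary sequence $1=a_0\ge a_1\ge a_2\ge\cdots>0$ with $a_i\le\beta_{i-1}$ for all $i\ge 1$ and with $a_i$ decaying faster than any polynomial in $1/i$; for instance one may take $a_i=\min\{\beta_{i-1},\,2^{-i^2}\}$, which is automatically non-increasing. Fix a standard $C^\infty$ transition function $\eta:\mathbb{R}\to[0,1]$ with $\eta\equiv 0$ on $(-\infty,0]$, $\eta\equiv 1$ on $[1,\infty)$, and all derivatives of $\eta$ vanishing at $0$ and $1$. Define $g(0)=0$, $g\equiv 1$ on $[1,2]$, and on each interval $[1/(i+1),1/i]$ with $i\ge 1$ set
\[
g(r)\;=\;a_i+(a_{i-1}-a_i)\,\eta\!\left(\frac{r-1/(i+1)}{1/i-1/(i+1)}\right).
\]
The flatness of $\eta$ at its endpoints ensures that on each side of every division point $1/i$ all derivatives of $g$ vanish, so $g$ is $C^\infty$ on $(0,2]$ and non-decreasing.

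The one remaining issue, and the main technical obstacle, is smoothness at $r=0$. A chain-rule estimate yields
\[
\|g^{(k)}\|_{C^0([1/(i+1),1/i])}\;\le\;C_k\,a_{i-1}\,(i+1)^{2k},
\]
since the transition interval has length $1/(i(i+1))$ and $g$ varies by at most $a_{i-1}$ across it. The superpolynomial decay of $a_i$ forces this bound to $0$ as $i\to\infty$ for every fixed $k$, so all derivatives of $g$ extend continuously by $0$ to $r=0$; hence $g\in C^\infty([0,2])$ with $g^{(k)}(0)=0$ for every $k$. Consequently $\omega(x)=g(|x|)$ is $C^\infty$ on $B^n(2)$: away from $0$ this is clear, and at $0$ the partial derivatives are sums of products of $g^{(j)}(|x|)$ with factors that blow up polynomially in $1/|x|$, all dominated by the flatness of $g$.

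Verifying the three listed properties is then immediate: $\omega>0$ off the origin since every $a_i>0$; for $|x|\le 1/(i+1)$ monotonicity of $g$ gives $\omega(x)\le g(1/(i+1))=a_i\le\beta_{i-1}$; and $\omega\equiv 1$ on $B^n(2)\setminus B^n(1)$ by construction. The freedom to take $a_i$ far smaller than $\beta_{i-1}$ is what decouples the bound (2) from the smoothness analysis at $0$, which is why no sharper choice of the $a_i$ is needed.
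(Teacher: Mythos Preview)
Your proof is correct and shares the paper's overall strategy: build $\omega$ radially as $\omega(x)=g(|x|)$ with $g\in C^\infty([0,2],[0,1])$ non-decreasing, flat at $0$, equal to $1$ on $[1,2]$, and satisfying $g(1/(i+1))\le\beta_{i-1}$. The one-dimensional constructions, however, differ. The paper writes $g$ as a uniformly convergent series $\sum_{i\ge1}2^{-i-1}\beta_{i-1}\,h(t-1/(i+1))$ with $h(t)=e^{-1/t}$, so flatness at $0$ is inherited automatically from $h$ and the size bounds come from the series tail; no modification of the $\beta_i$ is needed. You instead glue standard transition functions between prescribed levels $a_i=\min\{\beta_{i-1},2^{-i^2}\}$ on the intervals $[1/(i+1),1/i]$, and it is the auxiliary superpolynomial decay of the $a_i$ that forces every $g^{(k)}$ to vanish at $0$. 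Your decoupling trick makes the flatness mechanism more transparent and avoids any special function, at the price of introducing the extra sequence; the paper's series is slicker but leans on the particular properties of $e^{-1/t}$. Both arguments are standard and of comparable length.
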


\begin{proof} Without loss of  generality, we assume that
$\lim_{i\rightarrow \infty} \beta_i = 0$.

Let $h(t)$ be the function:
$$
   h(t) = \left\{\begin{array}{ll}
                e^{-1/t}, \quad & 0< t \le 1, \\
                0              , \quad & -1 < t \le 0.
                \end{array}
          \right.
$$
Clearly $h(t)$ is $C^\infty$ smooth and has zero
derivatives of all orders at $0$.
Let $\{\beta_i\}$ be as above and suppose $\{\alpha_i\}$
is any decreasing sequence of positive numbers
$1=\alpha_{-1}>\alpha_0>\alpha_1>\cdots>\alpha_i>\cdots$,
with $\lim_{i \rightarrow \infty} \alpha_i = 0$.
For $t < \alpha_0$, let $\eta(t)$ to be the function on
$[-1,\alpha_0]$ defined by the series:
$$
   \eta(t) = \sum_{i=1}^\infty 2^{-i-1} \beta_{i-1} h(t-\alpha_i).
$$
This series is monotone increasing in $i$ and converges uniformly.
It is zero on $[-1,0]$ and  positive  on $(0,\alpha_0]$.
For any $k$ and $0 < t < \alpha_k$ we have
\begin{equation}
    \eta(t) = \sum_{i=k+1}^\infty 2^{-i-1}  \beta_{i-1}  h(t-\alpha_i)
             < \frac{\beta_k h(1)}{2^k}.
\end{equation}
Further, since the derivative of the partial sums of this series converge
uniformly, we may take the derivative of the sum and we have that:
$$
     \eta'(t) = \sum_{i=1}^\infty 2^{-i-1} \beta_{i-1} h'(t-\alpha_i).
$$
This series also is monotone increasing and converges uniformly.
For $0 < t < \alpha_k$ we obtain
\begin{equation}
    \eta'(t)  = \sum_{i=j+1}^\infty 2^{-i-1}  \beta_{i-1}  h'(t-\alpha_i)
       < \frac{\beta_k h'(1)}{2^k}.
\end{equation}
Thus, $\lim_{t\rightarrow 0^+} \eta'(t) = 0$.
By induction, we may also conclude that
$$
     \eta^{(k)}(t) = \sum_{i=1}^\infty 2^{-i-1} \beta_{i-1} h^{(k)}(t-\alpha_i)
$$
converges uniformly and
\begin{equation}\label{limder}
\lim_{t\rightarrow 0^+} \eta^{(k)}(t) = 0.
\end{equation}
We can now clearly extend $\eta(t)$ to the interval $[0,2]$ in
such a way that $\eta$ is $C^\infty$ and $\eta(t) = 1$ for $t \in [1,2]$.

To finish the proof of the Lemma, we set $\alpha_i = (i+1)^{-1}$ in the
above construction of $\eta(t)$
and let $\omega(x) = \eta(|x|)$ on $B^n(2)$. Because of the construction and
equation (\ref{limder}) this function is $C^\infty$ smooth at $0$ and on $B^n(2)$.
\end{proof}

\begin {theorem}\label{theorem:3}
There exists a function ${\alpha}\in
C^\infty(\Omega,[0,1])$, satisfying {\bf (H)}, such that the flow
defined by the vector field ${Y}={\alpha} X$
has zero topological entropy.
\end{theorem}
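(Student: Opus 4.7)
The plan is to construct $\alpha$ as a $C^\infty$ bump function that vanishes only at $p$ and is so flat near $p$ that the reparametrized flow $\varphi_t$ slows down dramatically in any neighborhood of $p$. This will force, via Corollary~\ref{corollary:1}, that every invariant probability of $\varphi_t$ is atomic, i.e.\ carried by a periodic orbit. Since $(M,f)$ is minimal with $\dim M\geq 2$, $f$ has no periodic orbits, so every ergodic measure of $f$ is non-atomic; moreover every $\varphi_t$-orbit in $\Omega\setminus\{p\}$ coincides setwise with a non-periodic $\psi_t$-orbit, so the only periodic orbit of $\varphi_t$ is the fixed point $p$. Hence $\delta_p$ is the unique invariant probability of $\varphi_t$, and $h_{\mathrm{top}}(\varphi)=h_{\delta_p}(\varphi)=0$ by the variational principle.

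To construct $\alpha$, fix a coordinate chart identifying a neighborhood of $p$ with $B^n(2)\subset\mathbb{R}^n$, $n=m+1$, and let $c>0$ be a geometric constant so that $B_M(x_0,r)\times[0,r)$ embeds into $B_\Omega(p,cr)$ for all sufficiently small $r$. Let $L(\cdot)$ denote the uniform recurrence function from Lemma~\ref{lemma:1}. Choose a strictly decreasing sequence $\{\beta_i\}$ with $\beta_{-1}=1$ and
$$
\beta_{i-1}\leq \frac{1}{(i+1)^2\, c\, L\bigl(1/((i+1)c)\bigr)}\qquad(i\geq 1).
$$
Apply Lemma~\ref{lemma:7} to obtain $\omega:B^n(2)\to[0,1]$ with this data, pull $\omega$ back through the chart, and extend by the constant $1$ outside $B^n(1)$ (using $\omega\equiv 1$ on $B^n(2)\setminus B^n(1)$). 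This yields $\alpha\in C^\infty(\Omega,[0,1])$ satisfying \textbf{(H)}, with $\alpha(q)=0$ iff $q=p$, and $\alpha\leq\beta_{i-1}$ on $B_\Omega(p,1/(i+1))$ for all $i\geq 1$.

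From $d\tau/dt=\alpha(\varphi_t(x,0))$ and the requirement $\varphi_{\gamma(x)}(x,0)=\psi_1(x,0)$ one obtains
$$
\gamma(x)=\int_0^1 \frac{d\sigma}{\alpha(\psi_\sigma(x,0))}
$$
whenever the integral is finite. For $x\in B_M(x_0,1/((i+1)c))$ and $\sigma\in[0,1/((i+1)c))$ the point $(x,\sigma)=\psi_\sigma(x,0)$ lies in $B_\Omega(p,1/(i+1))$, whence $\alpha(\psi_\sigma(x,0))\leq\beta_{i-1}$ and
$$
\gamma(x)\geq \frac{1}{(i+1)\, c\, \beta_{i-1}} \geq (i+1)\, L\bigl(1/((i+1)c)\bigr).
$$
By Lemma~\ref{lemma:2}, every ergodic measure $\mu$ of $f$ satisfies $\mu\bigl(B_M(x_0,1/((i+1)c))\bigr)\geq 1/L\bigl(1/((i+1)c)\bigr)$, so
$$
E_\mu(\gamma)\geq (i+1)\, L\bigl(1/((i+1)c)\bigr)\cdot \frac{1}{L\bigl(1/((i+1)c)\bigr)} = i+1.
$$
Since this holds for every $i\geq 1$, we get $E_\mu(\gamma)=+\infty$ for every ergodic (hence every non-atomic ergodic) measure $\mu$ of $f$, and Corollary~\ref{corollary:1} then completes the argument outlined in the first paragraph.

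The main technical obstacle is tuning the flatness sequence $\{\beta_i\}$ so that it dominates the recurrence function $L$ at every scale while still producing a $C^\infty$ function with all derivatives vanishing at $p$; the function $L$ may grow arbitrarily rapidly, so the $\beta_i$ may need to decay faster than any prescribed rate. Lemma~\ref{lemma:7} supplies exactly this flexibility, permitting arbitrarily fast decay. Secondary bookkeeping concerns---tracking the geometric constant $c$ between the chart and $\Omega$-metric, matching indices so that the $L$-factors cancel exactly, and verifying smooth extension across $\partial B^n(1)$---are routine once the construction is in place.
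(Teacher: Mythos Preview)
Your proposal is correct and follows essentially the same route as the paper: choose $\{\beta_i\}$ decaying fast enough to beat the recurrence function $L$, build $\alpha$ via Lemma~\ref{lemma:7}, bound $\gamma$ from below on shrinking balls, combine with Lemma~\ref{lemma:2} to get $E_\mu(\gamma)=+\infty$, and finish with Corollary~\ref{corollary:1} and the variational principle. The only cosmetic differences are that the paper centers its balls at $f^{-1}(x_0)$ (tracking the orbit segment arriving at $p$) rather than at $x_0$ (departing from $p$), and that the paper quotes a specific variational result from \cite{sun} where you argue directly that $\delta_p$ is the unique invariant probability; neither changes the substance of the argument.
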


\begin{proof}
Recall that $p=[(x_0, 0)]=\pi(x_0, 0)$ in condition {\bf(H)} where  $\pi$
is the quotient map $\pi: M\times \mathbb{R}\rightarrow \Omega$ for
the suspension.

Without loss of generality, we can assume the  existence of   a
coordinate chart $(\widetilde{V}, \xi)$  of $\Omega$ satisfying the
following.

(i)\ There exists an open set $V$ of $\Omega$, such that $p\in V$
and $\bar{V}\subset \widetilde{V}$;

(ii)\ $\xi(p)=0$, $\xi(V)=B^{m+1}(1),$
$\xi(\widetilde{V})=B^{m+1}(2)$, where $2\leq m=dim M$;

(iii)\ $\exists$ $i_1\in \mathbb{N}$ such that
$$
   \textrm{Closure}\left( \pi(B_M(x_0,i_1^{-1})\times\{0\}) \right) \subset V
$$
and
$$
\xi(\pi(B_M(x_0,i_1^{-1})\times\{0\}))\subset \mathcal{R}=\{x=(x_1,
\ldots, x_m,  x_{m+1}): x_{m+1}=0\}.
$$

(iv)\ $\exists$ $i_2\in \mathbb{N}$ such that
$$
B_{\Omega}(p,i_2^{-1})\subset V
$$
and
$$
\xi(B_{\Omega}(p,i^{-1}))=B^{m+1}(i^{-1})
$$
for any $i_2<i\in \mathbb{N}$.

Under these assumptions,  there exists  $i_3\in \mathbb{N}$ and
$i_2<i_3$, with the property that for any $i\geq 0$ there exists
$1\gg l_{i_3+i}>0$ such that
$$
\textrm{Closure} \left( \pi (B_M(x_0,
\frac{1}{i_3+i})\times[-l_{i_3+i}, 0]) \right)
            \subset B_{\Omega}(p, \frac{1}{i_2+i}).
$$
We set  $i_0:=\max \{i_1, i_2, i_3\}$. For any $i>i_0$, by 
Lemma~\ref{lemma:2}, there exists
$L(\frac{1}{i})$ such that for any ergodic measure $\tau$ of $f$, we
have
$$
\tau(B_M(f^{-1}x_0, \frac{1}{i}))\geq
\frac{1}{L(\frac{1}{i})}:=\delta(i)>0.
$$ 
We define $\beta_{-1}:=1$ and
$\beta_{i-1}:=\frac{l_{i_0+i}}{i_0+i}\delta(i_0+i)$ for $i\geq 1$.

Using Lemma \ref{lemma:7}, one can find a $C^\infty$ function
$\omega: \xi(\widetilde{V})\rightarrow [0,1]$ with the properties:

\smallskip
(i)\ $\omega|_{\xi(\widetilde{V}\setminus V)}\equiv 1$;

\smallskip
(ii)\ $\parallel
\omega|_{B^{m+1}(\frac{1}{i_2+i})}\parallel_{C^0}\leq \beta_{i-1}$;

\smallskip
(iii)\ $\omega(0)=0$ and $0<\omega(a)\leq1$ for $0\neq a\in
\xi(\widetilde{V})$.

\smallskip
\noindent We then define a function ${\alpha} $ in  $
C^\infty(\Omega, [0,1])$ as follows
$$
{\alpha}(q):= \left\{
\begin{array}{ll}
\omega\circ \xi(q),&\ \ q\in \widetilde{V};\\
 1, &\ \ q\in \Omega\setminus \widetilde{V}.
\end {array}
\right.
$$
Then
$$
\left\| {\alpha}|_{B_{\Omega}(p, \frac{1}{i_2+i})} \right\|_{C^0}
  \, = \sup\limits_{x\in B_{\Omega}(p, \frac{1}{i_2+i})}\{{\alpha}(x)\}
       \leq \beta_{i-1},
$$
where we assume, without loss of generality, that $\|\xi\|_{C^0}\leq
1.$ We then define ${Y}:={\alpha}X$ and let ${\varphi}_t$ denote the
flow induced by ${Y}$.

Recall the function  $\gamma: M\to \mathbb{R}\cup \{\infty\} $ in
Lemma \ref{lemma:3} and observe that for any $x\in B_M(f^{-1}(x_0),
\frac{1}{i_2+i})$,
$$
l_{i_0+i} = \int^{\gamma(x)}_{t(x)}
     \sqrt{<{\alpha}({\varphi}_s(x))X({\varphi}_s(x)),
            {\alpha}({\varphi}_s(x))X({\varphi}_s(x))>} \, ds,
$$
where $t(x)>0$ satisfies ${\varphi}_{t(x)}(x) =
\psi_{1-l_{i_0+i}}(x)$. Then
$$
\gamma(x)\geq \gamma(x)-t(x)\geq\frac {l_{i_0+i}}{\|
{\alpha}|_{B_{\Omega}(p, \frac{1}{i_2+i})} \| \| X\|}\geq \frac
{l_{i_0+i}}{\beta_{i-1} \|X\|} = \frac{i_0+i}{\delta(i_0+i)\|X\|  }
$$ for any $x\in B_M(f^{-1}(x_0), \frac{1}{i_0+i})$. Therefore,
$$
\gamma|_{B_M(f^{-1}(x_0), \frac{1}{i_0+i})}\geq
\frac{i_0+i}{\delta(i_0+i)\|X\|}.
$$

Now for any ergodic measure  $\tau$  of $f$,
$$
E_{\tau}(\gamma)=\int_M \gamma(x) \, d\mu(x)
          \geq \frac{i_0+i}{\delta(i_0+i)\|X\|}
                \tau( B_M(f^{-1}(x_0), \frac{1}{i_0+i}))
           \geq \frac{i_0+i}{\|X\|}\to +\infty
$$
as $i\to +\infty$. By Corollary \ref{corollary:1}, all ergodic
measures of the flow ${\varphi}$ induced by ${Y}={\alpha} X$ are
atomic. The topological entropy $h(\varphi)$ equals $h(\varphi_1)$
by definition, while $h(\varphi_1)$ coincides with the supremum of
measure theoretic entropies $h_m(\varphi_1)$ over all $\varphi$
invariant and ergodic measures $m$ by Theorem A of \cite{sun} (note
that a $\varphi$-invariant and ergodic measure is
$\varphi_1$-invariant but not necessarily $\varphi_1$-ergodic). So
the topological entropy of $\varphi$ is zero.
\end{proof}

\begin{theorem}\label{theorem:4}
There exists a function $\widehat{\alpha}\in C^\infty(\Omega,[0,1])$,
satisfying {\bf (H)}, such that $\widehat{Y}=\widehat{\alpha} X$
has positive topological entropy.
\end {theorem}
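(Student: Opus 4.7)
The strategy is to apply the Totoki time-change framework from Section~2.3 with a function $\widehat{\alpha}$ whose reciprocal is $\bar{\mu}$-integrable. Let $\mu$ be the ergodic $f$-invariant probability with $h_\mu(f)>0$ (fixed throughout Section~3), and let $\bar{\mu}$ be its suspension as in Lemma~\ref{lemma:8}, so that $\bar{\mu}$ is $\psi$-ergodic and Proposition~\ref{th:5} gives $h_{\bar{\mu}}(\psi)=h_\mu(f)>0$. For any $\widehat{\alpha}\in C^\infty(\Omega,[0,1])$ satisfying \textbf{(H)} set $a(q)=1/\widehat{\alpha}(q)$ on $\Omega\setminus\{p\}$. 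A direct chain-rule computation (writing $\widehat{\varphi}_t(q)=\psi_{\tau(t,q)}(q)$ and matching $\partial_t\tau=\widehat{\alpha}\circ\widehat{\varphi}_t$ against $\theta(\tau(t,q),q)=t$) identifies the flow $\widehat{\varphi}$ induced by $\widehat{\alpha}X$ with the time change of $\psi$ by the additive function $\theta(t,q)=\int_0^t a(\psi_sq)\,ds$. Provided $a\in L^1(\bar{\mu})$, Lemma~\ref{lemma:4} makes $\theta$ an integrable additive function and $\widehat{\mu}(B):=\int_B a\,d\bar{\mu}$ a finite, $\widehat{\varphi}$-invariant measure, ergodic by Lemma~\ref{lemma:6}.

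With this integrability in place, the equality case of Theorem~\ref{theorem:2} (valid because $\bar{\mu}$ is $\psi$-ergodic) gives
\[
   h_{\widehat{\mu}}(\widehat{\varphi}_1)\,\widehat{\mu}(\widehat{\Omega})
   \;=\; h_{\bar{\mu}}(\psi_1)\,\bar{\mu}(\Omega)
   \;=\; h_\mu(f) \;>\; 0,
\]
so after normalising $\widehat{\mu}$ to an ergodic $\widehat{\varphi}$-invariant probability the measure-theoretic entropy of $\widehat{\varphi}_1$ remains strictly positive. The variational principle and Theorem~A of \cite{sun}, used exactly as at the end of the proof of Theorem~\ref{theorem:3}, then upgrade this to $h_{\mathrm{top}}(\widehat{\varphi})>0$.

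For the concrete $\widehat{\alpha}$ I would use the chart $(\widetilde{V},\xi)$ around $p$ from the proof of Theorem~\ref{theorem:3} and set $\widehat{\alpha}(q)=g(|\xi(q)|^2)$ on $\widetilde{V}$, $\widehat{\alpha}\equiv 1$ off $\widetilde{V}$, with $g\in C^\infty([0,4],[0,1])$ satisfying $g(0)=0$, $g'(0)>0$ and $g(s)=1$ for $s\ge1$; this is $C^\infty$, satisfies \textbf{(H)}, and yields $\widehat{\alpha}(q)\asymp d_\Omega(q,p)^2$ near $p$. The main obstacle is verifying $\int\widehat{\alpha}^{-1}\,d\bar{\mu}<\infty$: any smooth $\widehat{\alpha}\ge 0$ vanishing only at $p$ satisfies $\widehat{\alpha}(q)\le C\,d(q,p)^2$ near $p$ by Taylor expansion, so $\widehat{\alpha}^{-1}$ unavoidably grows at least like $d(\cdot,p)^{-2}$, and the construction above realises this optimal order. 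Writing $d\bar{\mu}=d\mu\otimes dt$ in a suspension-box coordinate near $p$ and using $\int_{-\varepsilon}^{\varepsilon}(|y|^2+t^2)^{-1}\,dt\le\pi/|y|$, the integrability reduces to $\int_M d_M(y,x_0)^{-1}\,d\mu(y)<\infty$, i.e.\ to $\mu(B_M(x_0,r))=O(r^{1+\delta})$ for some $\delta>0$. Since $p=(x_0,0)$ is fixed once and used for both Theorems~\ref{theorem:3} and~\ref{theorem:4}, the argument reduces to choosing $x_0$ at the outset with this dimension property; in Herman's construction \cite{her} one can take $\mu$ absolutely continuous on an $m$-dimensional manifold with $m\ge 2$, so every $x_0$ works and the choice is automatically compatible with the minimality and uniform-recurrence hypotheses used in Theorem~\ref{theorem:3}.
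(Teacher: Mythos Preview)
Your proof follows the same approach as the paper: construct $\widehat{\alpha}$ vanishing to second order at $p$, verify that $1/\widehat{\alpha}$ is $\bar{\mu}$-integrable, and then apply the equality case of Theorem~\ref{theorem:2} to transfer the positive entropy of $\psi$ to $\widehat{\varphi}$. You are in fact more careful than the paper about the $\bar{\mu}$-integrability step: the paper argues via Lebesgue integrability of $\|x\|^{-2}$ in dimension $m+1\ge 3$, tacitly treating $\bar{\mu}$ as comparable to Lebesgue measure in the chart, whereas you correctly trace the condition back to a local-dimension requirement on $\mu$ at $x_0$ and note that it is met because Herman's measure can be taken absolutely continuous.
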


\begin{proof}
Take $p\in \Omega$ as in Theorem \ref{theorem:3}. Recall that
$f: M\to M$ is
a minimal diffeomorphism  which preserves   an
ergodic  measure $\mu$ with $h_{\mu}(f)>0.$ Let $\bar \mu$
denote the ergodic measure on $(\Omega, \psi)$ defined in Lemma
\ref{lemma:8}. By Theorem~\ref{th:5},  $h_{\bar\mu}(\psi)=h_\mu(f)>0$.
Noting that $\dim M\geq 2$ and thus that $\dim \Omega \geq 3$, we can
find a function $\widehat{\alpha} \in C^\infty(\Omega, [0,1])$
that satisfies not only  the assumption  {\bf (H)} stated in Section 2  but
also the following:
\begin{equation}\label{eq:8}
\int_{\Omega} \frac{1}{\widehat{\alpha}(x)} \, d\bar{\mu}(x) = K < \infty.
\end{equation}
In fact, we can assume that the coordinate $(V, \xi)$
satisfies $\xi(V)=B^{m+1}(2)$ and
$\xi(V')=B^{m+1}(1)$, here $V'$ is an open set of
$\Omega$ satisfying $p\in V'$ and $\overline{V'}\subset V$. We
select a smooth function $\omega: \mathbb{R}^{m+1}\to \mathbb{R}$
satisfying
\begin{enumerate}
\item $\omega(x)= \|x\|^2 := (x_1)^2+(x_2)^2+\cdots +(x_{m+1})^2$, \ if
 $\|x\| \leq
\frac{1}{2}$;
\item $\frac{1}{4}<|\omega(x)|<2$, \ if $\frac{1}{2}<||x||<1$;
\item $\omega(x)=1$, \ if $||x||\geq 1$.\\
\end{enumerate}
It is easy to see that $\frac{1}{\omega(x)}$ is  integrable on
$B^{m+1}(2)$ since $m+1\geq 3$. Then the function
$$\widehat{\alpha}(q)=
\left\{
\begin{array}{cl}
\omega\circ \xi^{-1}&\ \ p\in V;\\
1&\ \ p\in M\setminus V
\end{array}
\right.
$$
satisfies assumption {\bf (H)} and (\ref{eq:8}).
By Lemma \ref{lemma:4}, the function
$$
 \theta(t,x) = \int_0^t \frac{1}{\widehat{\alpha}(\psi_sx)} \, ds
$$
is  an integrable additive function. Consider  the regular set
$\widehat{\Omega}$ of $\theta$ and construct the corresponding
Borel $\sigma$ algebra $\widehat{B}$ (see the  description  after
Remark \ref {remark:1}). Denote by $\widehat{\varphi}_t$ the time
changed system of $\psi_t$.

One can easily verify that $\widehat{\varphi}_t$ is the flow
induced by the vector field $\widehat{Y}:=\widehat{\alpha} X$. As
discussed in Section 2, we can define a measure on
$(\widehat{\Omega}, \widehat{\mathcal{B}})$:
$$
\widehat{\mu}(B) = \int _B \, d\widehat{\mu}(x)
= \int_B \frac{1}{\widehat{\alpha}(x)} \, d\bar{\mu}(x)
$$
for all $B\in \widehat{\mathcal{B}}$. By Lemma~\ref{lemma:6},
$\widehat{\mu}$ is an invariant ergodic measure of $\widehat{\varphi}_t$ and
$$
 \widehat{\mu}(\widehat{\Omega})
    \leq \int_{\Omega} \frac{1}{\widehat{\alpha}(x)} \, d\bar{\mu}(x) = K < \infty.
$$
Since $\bar \mu$ is  ergodic and by Theorem \ref{theorem:2}, we have
$$
h_{\widehat{\mu}}(\widehat{\varphi_1})\widehat{\mu}(\widehat{\Omega})
    = h_{\bar\mu}(\psi_1)\bar{\mu}(\Omega).
$$
So,
$h_{\widehat{\mu}}(\widehat{\varphi})=h_{\widehat{\mu}}(\widehat{\varphi}_1)
>0$.
\end{proof}

\medskip

\noindent
{\bf Proof of Theorem \ref{theorem:1}.} \\
The flow induced by the $C^\infty$
vector field $Y=\alpha X$  has zero entropy and the flow
induced by the $C^\infty$ vector field $\widehat{Y}=\widehat{\alpha} X$ has
positive entropy. They are equivalent by Proposition~\ref{remark:2}.
\hfill $\Box$

\medskip

\noindent
{\bf Final note.} \\
The map $f:M \rightarrow M$ provided by \cite{her} is real analytic.
However, our method of proof clearly cannot be made analytic
since $\alpha$ is flat at $p$. It seems likely that zero is an invariant
of equivalent analytic flows.

\appendix
\section*{Acknowledgement}
The authors are grateful to the referee for improving the manuscript 
and to David Epstein for suggesting the idea of Lemma 3.1. The second 
author thanks the Mathematics Research Centre at Warwick University for 
their hospitality during the period the work was completed.

\bibliographystyle{amsplain}

\end{document}